\newif\ifslide
\theoremstyle{plain}
\newtheorem{theorem}{Theorem}
\newtheorem{theorem}{Theorem}[section]
\newtheorem{corollary}[theorem]{Corollary}
\newtheorem{lemma}[theorem]{Lemma}
\newtheorem{claim}[theorem]{Claim}
\newtheorem*{theorem*}{Theorem}
\newtheorem{proposition}[theorem]{Proposition}
\newtheorem{definition-lemma}[theorem]{Definition-Lemma}
\newtheorem{question}[theorem]{Question}
\newtheorem{red-question}[theorem]{\textcolor{red}{Question}}
\theoremstyle{definition}
\newtheorem{remark}[theorem]{Remark}
\newtheorem{example}[theorem]{Example}
\newtheorem{notation}[theorem]{Notation}
\DeclareMathOperator{\Cl}{Cl}
\renewcommand{\P}{\mathbb P}
\def\ideal#1.{I_{#1}}
\def\ring#1.{\mathcal {O}_{#1}}
\def\Spec{\operatorname {Spec}}
\def\fring#1.{\hat{\mathcal {O}}_{#1}}
\def\proj#1.{\mathbb {P}(#1)}
\def\pr #1.{\mathbb {P}^{#1}}
\def\dpr #1.{\hat{\mathbb {P}}^{#1}}
\def\af #1.{\mathbb A^{#1}}
\def\Hz #1.{\mathbb F_{#1}}
\def\Hbz #1.{\overline{\mathbb F}_{#1}}
\def\fb#1.{\underset #1 {\times}}
\def\rest#1.{\underset {\ \ring #1.} \to \otimes}
\def\au#1.{\operatorname {Aut}\,(#1)}
\def\deg#1.{\operatorname {deg } (#1)}
\def\pic#1.{\operatorname {Pic}\,(#1)}
\def\pico#1.{\operatorname{Pic}^0(#1)}
\def\picg#1.{\operatorname {Pic}^G(#1)}
\def\ner#1.{NS (#1)}
\def\rdown#1.{\llcorner#1\lrcorner}
\def\rfdown#1.{\lfloor{#1}\rfloor}
\def\rup#1.{\ulcorner{#1}\urcorner}
\def\rcup#1.{\lceil{#1}\rceil}
\def\n1#1.{\operatorname {N_1}(#1)}  %Vector space of 1-cycles
\def\cn1#1.{\overline{\operatorname {N^1}(#1)}} %closure
\def\cone#1.{\operatorname {NE}(#1)}     %cone of curves
\def\ccone#1.{\overline{\operatorname {NE}}(#1)}
\def\none#1.{\operatorname {NF}(#1)}
\def\cnone#1.{\overline{\operatorname {NF}}(#1)}
\def\mone#1.{\operatorname {NM}(#1)} %cone of moving curves
\def\cmone#1.{\overline{\operatorname {NM}}(#1)}
\def\coef#1.{\frac{(#1-1)}{#1}}
\def\vit#1.{D_{\langle #1 \rangle}}
\def\mm#1.{\overline {M}_{0,#1}}
\def\H1#1.{H^1(#1,{\ring #1.})}
\def\ac#1.{\overline {\mathbb F}_{#1}}
\def\adj#1.{\frac {#1-1}{#1}}
\def\spn#1.{\overline{#1}}
\def\pek#1.#2.{\Cal P^{#1}(#2)}
\def\plk#1.#2.{\Cal P^{\leq #1}(#2)}
\def\ev#1.{\operatorname{ev_{#1}}}
\def\ilist#1.{{#1}_1,{#1}_2,\dots}
\def\bminv#1.{(\nu_1,s_1;\nu_2,s_2;\dots ;\nu_{#1},s_{#1};\nu_{r+1})}
\def\zinv#1.{(\nu_1,s_1;\nu_2,s_2;\dots ;\nu_{#1},s_{#1};0)}
\def\iinv#1.{(\nu_1,s_1;\nu_2,s_2;\dots ;\nu_{#1},s_{#1};\infty)}
\def\scr #1.{\mathcal #1}
\def\llist#1.#2.{{#1}_1,{#1}_2,\dots,{#1}_{#2}}
\def\ulist#1.#2.{{#1}^1,{#1}^2,\dots,{#1}^{#2}}
\def\lomitlist#1.#2.{{#1}_1,{#1}_2,\dots,\hat {{#1}_i}, \dots, {#1}_{#2}}
\def\lomitlistz#1.#2.{{#1}_0,{#1}_1,\dots,\hat {{#1}_i}, \dots, {#1}_{#2}}
\def\loc#1.#2.{\Cal O_{#1,#2}}
\def\fderiv#1.#2.{\frac {\partial #1}{\partial #2}}
\def\deriv#1.#2.{\frac {d #1}{d #2}}
\def\map#1.#2.{#1 \longrightarrow #2}
\def\rmap#1.#2.{#1 \dasharrow #2}
\def\emb#1.#2.{#1 \hookrightarrow #2}
\def\non#1.#2.{\text {Spec }#1[\epsilon]/(\epsilon)^{#2}}
\def\Hi#1.#2.{\text {Hilb}^{#1}(#2)}
\def\sym#1.#2.{\operatorname {Sym}^{#1}(#2)}
\def\Hb#1.#2.{\text {Hilb}_{#1}(#2)}
\def\Hm#1.#2.{\Hom_{#1}(#2)}
\def\prd#1.#2.{{#1}_1\cdot {#1}_2\cdots {#1}_{#2}}
\def\Bl #1.#2.{\operatorname {Bl}_{#1}#2}
\def\pl #1.#2.{#1^{\otimes #2}}
\def\mgn#1.#2.{\overline {M}_{#1,#2}}
\def\ialist#1.#2.{{#1}_1 #2 {#1}_2, #2\dots}
\def\pair#1.#2.{\langle #1, #2\rangle}
\def\vandermonde#1.#2.{\left|
\begin{matrix}
1 & 1 & 1 & \dots & 1\\
{#1}_1 & {#1}_2 & {#1}_3 & \dots & {#1}_{#2}\\
{#1}_1^2 & {#1}_2^2 & {#1}_3^2 & \dots & {#1}_{#2}^2\\
\vdots & \vdots & \vdots & \ddots & \vdots\\
{#1}_1^{#2-1} & {#1}_2^{#2-1} & {#1}_2^{#2-1} & \dots & {#1}_{#2}^{#2-1}\\
\end{matrix}
\right|
}
\def\vandermondet#1.#2.{\left|
\begin{matrix}
1 & {#1}_1   & {#1}_1^2 & \dots & {#1}_1^{#2-1}\\
1 & {#1}_2   & {#1}_2^2 & \dots & {#1}_2^{#2-1}\\
1 & {#1}_3   & {#1}_3^2 & \dots & {#1}_3^{#2-1}\\
\vdots & \vdots & \vdots & \ddots & \vdots\\
1 & {#1}_{#2}& {#1}_{#2}^2 & \dots & {#1}_{#2}^{#2-1}\\
\end{matrix}
\right|
}
\def\gr#1.#2.{\mathbb{G}(#1,#2)}
\def\alist#1.#2.#3.{{#1}_1 #2 {#1}_2 #2\dots #2 {#1}_{#3}}
\def\zlist#1.#2.#3.{#1_0 #2 #1_1 #2\dots #2 #1_{#3}}
\def\lomitlist30#1.#2.#3.{{#1}_0,{#1}_1 #2 \dots #2\hat {{#1}_i} #2\dots #2 {#1}_{#3}}
\def\lmap#1.#2.#3.{#1 \overset{#2}{\longrightarrow} #3}
\def\mes#1.#2.#3.{#1 \longrightarrow #2 \longrightarrow #3}
\def\ses#1.#2.#3.{0\longrightarrow #1 \longrightarrow #2 \longrightarrow #3 \longrightarrow 0}
\def\les#1.#2.#3.{0\longrightarrow #1 \longrightarrow #2 \longrightarrow #3}
\def\res#1.#2.#3.{#1 \longrightarrow #2 \longrightarrow #3\longrightarrow 0}
\def\Hi#1.#2.#3.{\text {Hilb}^{#1}_{#2}(#3)}
\def\ten#1.#2.#3.{#1\underset {#2}{\otimes} #3}
\def\lomitlist30#1.#2.#3.{{#1}_0 #2 {#1}_1 #2 \dots #2 \hat {{#1}_i} #2 \dots #2 {#1}_{#3}}
\def\mderiv#1.#2.#3.{\frac {d^{#3} #1}{d #2^{#3}}}
\def\Hom{\operatorname{Hom}}
\def\Proj{\operatorname{Proj}}
\def\dim{\operatorname{dim}}
\def\deg{\operatorname{deg}}
\def\Aut{\operatorname{Aut}}
\def\Sing{\operatorname{Sing}}
\def\lct{\operatorname{lct}}
\def\mult{\operatorname{mult}}
\def\rest{\operatorname{res}}
\def\C{\mathbb C}
\def\e{\Cal E}
\def\e1{E_1}
\def\e2{E_2}
\def\Q{\mathbb Q}
\def\mapdown#1{\big\downarrow\rlap{$\vcenter{\hbox{$\scriptstyle#1$}}$}}
\def\mapse#1{
{\vcenter{\hbox{$\mathop{\smash{\raise1pt\hbox{$\diagdown$}\!\lower7pt
\hbox{$\searrow$}}\vphantom{p}}\limits_{#1}\vphantom{\mapdown{}}$}}}}
\def\VR#1.{height#1pt&\omit&&\omit&&\omit&&\omit&&\omit&\cr}
\def\VRT#1.{height#1pt&\omit&&\omit&\cr}
\newcommand{\bA}{\ensuremath{\mathbb{A}}}
\newcommand{\bC}{\ensuremath{\mathbb{C}}}
\newcommand{\bP}{\ensuremath{\mathbb{P}}}
\newcommand{\bQ}{\ensuremath{\mathbb{Q}}}
\newcommand{\bZ}{\ensuremath{\mathbb{Z}}}
\newcommand{\cJ}{\ensuremath{\mathcal{J}}}
\newcommand{\cL}{\ensuremath{\mathcal{L}}}
\newcommand{\cO}{\ensuremath{\mathcal{O}}}
\newcommand{\cQ}{\ensuremath{\mathcal{Q}}}
\newcommand{\cT}{\ensuremath{\mathcal{T}}}
\DeclareMathOperator{\omult}{omult}
\title{On K-stability of Fano weighted hypersurfaces}
\date{\today}
\author{Taro Sano}
\email{tarosano@math.kobe-u.ac.jp} 
\address{Department of Mathematics, Graduate School of Science, 
Kobe university, 
1-1, Rokkodai, Nada-ku, Kobe 657-8501, Japan}
\author{Luca Tasin}
\email{luca.tasin@unimi.it}
\address{Dipartimento di Matematica F.\ Enriques, Universit\`a degli Studi di Milano, Via Cesare Saldini 50, 20133 Milano, Italy}
\begin{document}

\subjclass[2010]{primary 14J40,14J45}

\keywords{Fano varieties, K-stability}

\maketitle

\begin{abstract}
Let $X \subset \mathbb{P}(a_0,\ldots,a_n)$ be a quasi-smooth weighted Fano hypersurface of degree $d$ and index $I_X$ such that $a_i |d$ for all $i$. 

If $I_X=1$, we show that, under a suitable condition, the $\alpha$-invariant of $X$ is greater than or equal to $\dim X/(\dim X+1)$ and  $X$ is K-stable. This can be applied in particular to any $X$ as above such that $\dim X \le 3$.
If $X$ is general and $I_X < \dim X$, then we show that $X$ is K-stable. 

We also give a sufficient condition for the finiteness of automorphism groups of quasi-smooth Fano weighted complete intersections. 
\end{abstract}

\section{Introduction}

Let $X$ be a Fano variety with log terminal singularities. The \textit{$\alpha$-invariant} of $X$ (also known as \textit{global log canonical threshold}) is defined as
$$
\alpha(X)=\textrm{glct}(X):= \sup \{ c \in \bQ \mid (X, cD) \text{ is log canonical for all } 0 \le D \sim_{\bQ} -K_X  \}. 
$$

This was introduced by Tian \cite{MR894378} in analytic terms to find a K\"{a}hler-Einstein metric on a Fano manifold (see also \cite[Appendix]{MR2484031}). 
This is a fundamental invariant of $X$ from many points of view. It is known that if $\alpha(X) > \dim X/(\dim X+1)$, then $X$ is K-stable (see \cite[Theorem 1.4]{MR2889147}). If $X$ is smooth then equality is enough, see \cite[Theorem 1.3]{MR3936640} (cf. \cite{Liu:2019tf}).

In \cite{Pukhlikov98, Cheltsov01} it is shown that if $X \subset \bP^{n+1}$ is a smooth Fano hypersurface of degree $n +1$, then $\alpha(X) \ge \frac{n}{n+1}$ and so $X$ is K-stable. (See also \cite{Ahmadinezhad:2020wz}, \cite{Ahmadinezhad:2021tq} for recent progress in the higher index cases.)
A natural case to then consider is that of $X \subset \bP(a_0,\ldots,a_n)$ Fano weighted hypersurface of index 1.
Del Pezzo surfaces $X \subset \bP(a_0,\ldots,a_3)$ of index 1 have been classified in \cite{Johnson-Kollar2} and the existence of K\"{a}hler-Einstein metrics has been determined (cf.\ Ibid, \cite{MR1897386}, \cite{MR1926877}, \cite{CJS10}, \cite{Liu:2020tk}). 
Fano threefolds $X \subset \bP(a_0,\ldots,a_4)$ of index 1 have been classified in \cite{Johnson-Kollar} and the K-stability of the terminal ones is well studied (cf. \cite{Cheltsov08}, \cite{MR2499678}, \cite{KOW18}, \cite{Kim:2020wz}). 
Very little is known in higher dimension except for \cite[Proposition 3.3]{Johnson-Kollar}, \cite[Theorem 1.2]{MR4056840}, \cite[Theorem 1.3]{MR4139041} to the authors' knowledge. 

The idea of this paper is to generalise the methods introduced in \cite{Pukhlikov98} and \cite{Cheltsov01,CP02} to study the $\alpha$-invariant of weighted Fano hypersurfaces in any dimension. Some consequences of our work are collected in the following result.

\begin{theorem}\label{thm:mix}
	Let $X \subset \mathbb P(a_0,\ldots, a_n)$ be a well-formed quasi-smooth weighted hypersurface of degree $d$. Assume that $a_i |d$ for all $i$ and that $X$ is Fano of index 1. Assume also that (at least) one of the following conditions holds:
	\begin{enumerate}[(i)]
		\item\label{cor:general} $X$ is general or
		\item \label{cor:ones} $a_2=\ldots=a_{n}=1$ or
		\item\label{cor:low} $\dim X \le 3$ or
		\item \label{cor:smooth} $X$ is smooth and $\dim X \le 49$.
	\end{enumerate}
Then we have 
$$
\alpha(X) \ge 
\begin{cases} \frac{d-2}{d} &\mbox{if } d=2a,\ a \ge 3 \mbox{ and (up to permutation) } \bP=\bP(a_0,\ldots,a_{n-2},2,a)  \\
\frac{d-1}{d} & \mbox{otherwise}. 
\end{cases}
$$

Moreover, $X$ is K-stable and admits a K\"ahler-Einstein metric.

If in addition $a_i \ge 2$ for any $i$ and we are not in the former case above, then $\alpha(X) \ge 1$.
\end{theorem}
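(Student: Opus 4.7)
My plan is to follow the multiplicity technique pioneered by Pukhlikov \cite{Pukhlikov98} and Cheltsov-Park \cite{Cheltsov01,CP02}, generalised to the weighted setting. Since $I_X=1$ gives $-K_X \sim_\bQ \cO_X(1)$, establishing the stated bound on $\alpha(X)$ amounts to proving that for every effective $\bQ$-divisor $D \sim_\bQ -K_X$ and every point $p \in X$, the pair $(X, \lambda D)$ is log canonical at $p$, where $\lambda$ is the target threshold. I would argue by contradiction: assume $(X, \lambda D)$ is not log canonical at some $p \in X$, use Shokurov cutting-down by general members of $|\cO_X(k)|$ to reduce the non-klt centre to $p$, and then seek a contradiction via an intersection bound of the form $\mult_p D \le (D \cdot C)/\mult_p C$ for a suitably chosen curve $C \subset X$ through $p$ (replaced by its orbifold analogue at a cyclic quotient singularity).

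The divisibility hypothesis $a_i \mid d$ is crucial here: it ensures enough weighted hypersurfaces on $X$ so that slicing by $n-2$ general such divisors yields an irreducible curve through $p$ of controlled $\cO_X(1)$-degree, giving $\alpha(X) \ge (d-1)/d$. This bound dominates the K-stability threshold $n/(n+1)$ precisely when $d \ge n+1$; the boundary case $d=n$ forces all weights to be $1$ and recovers the classical Fano hypersurface, handled by the equality version \cite[Theorem 1.3]{MR3936640} valid for smooth $X$. I expect the main obstacle to be the exceptional configuration $\bP(a_0,\ldots,a_{n-2},2,a)$ with $d=2a$: here the weight-$2$ and weight-$a$ coordinates induce an ambient cone structure which forces the constructed curve family to acquire an excess intersection at a distinguished point, costing one unit in the multiplicity bound and yielding only $(d-2)/d$. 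A separate local computation via a weighted blow-up at this point is needed to confirm that $\alpha$ does not drop further.

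Once the general $\alpha$-estimate is in place, the four sufficient conditions are verified in turn. Genericity (i) excludes the loci where the auxiliary curve family could concentrate at $p$. Condition (ii), with $a_2=\cdots=a_n=1$, reduces the ambient space essentially to a projective cone and allows a direct adaptation of the classical argument. For (iii) I would appeal to the explicit classifications of Johnson-Koll\'ar \cite{Johnson-Kollar2,Johnson-Kollar}, which, combined with $a_i \mid d$, narrow the possibilities to a finite check; for (iv), smoothness together with $\dim X \le 49$ is used via \cite{MR3936640} to cover the borderline case $d=n+1$ by the equality version of the K-stability criterion. The K-stability and K\"ahler-Einstein conclusions then follow automatically from \cite[Theorem 1.4]{MR2889147}. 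Finally, the refinement $\alpha(X) \ge 1$ when all $a_i \ge 2$ (outside the exceptional configuration) comes from sharpening the curve-construction above: with no weight-$1$ coordinates no coordinate section provides a limiting member of $|{-K_X}|$, and the multiplicity bound on $D$ at $p$ improves by one unit.
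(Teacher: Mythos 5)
Your outline captures the broad Pukhlikov--Cheltsov strategy (reduce the non-klt locus of $(X,cD)$ to a point, then derive a contradiction there), but it misses the two devices that make the paper's proof work, and the step you propose in their place would fail. For the reduction to an isolated centre, the paper does not use Shokurov cutting-down or a curve construction on $X$ itself: it builds a finite cover $\pi'\colon\bP^n\to\bP(a_0,\ldots,a_n)$ (a composition of the maps $x_i\mapsto x_i^{a_i}$, twisted by automorphisms) such that $Y=(\pi')^{-1}(X)\subset\bP^n$ is a \emph{smooth} hypersurface, applies Cheltsov's bound $\mult_{\tilde C}\tilde D\le r$ on $Y$, and descends it through the ramification formula (Proposition \ref{prop:multlemma}). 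Hypotheses (i)--(iv) all serve this single purpose: they are precisely the cases in which Lemmas \ref{lem:smooth} and \ref{lem:special} produce such a smooth cover (Examples \ref{ex:4-folds} and \ref{ex:smooth} show this can genuinely fail otherwise). Your assignment of separate roles to (i)--(iv) --- genericity excluding bad loci for a curve family, (iv) handling a borderline case of the K-stability criterion --- does not reflect what these hypotheses are actually for.

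At the isolated centre $Q$ your argument has a real gap. Non-log-canonicity of $(X,cD)$ at $Q$ only yields $\mult_Q D>1/c$, barely above $1$, while your proposed bound $\mult_Q D\le (D\cdot C)/\mult_Q C$ for a curve cut out by general weighted hypersurfaces gives an upper bound that is also about $1$ (since $D\cdot C=H\cdot C$); these are compatible, so no contradiction results. The paper instead projects from a coordinate point, $p_i\colon X\to\bP(a_0,\ldots,\check a_i,\ldots,a_n)$, pushes the pair forward, and runs a Nadel-vanishing argument (Lemma \ref{lem:Nadel}) on the multiplier ideal $\cJ((\bP_i,B_{\bP_i});H)$ twisted by $\cO(-1)$ (or by $\cO(-\bar a_j)$ at singular image points): the ampleness of $-H-(K_{\bP_i}+B_{\bP_i})$, which is the content of the purely numerical Lemma \ref{lem:ineq} using index $1$ and $a_i\mid d$, forces a surjection from $H^0(\cO(-1))=0$ onto a nonzero space. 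This is also where the exceptional case $\bP(\ldots,2,a)$ with $d=2a$ and the refinement $\alpha\ge1$ for $a_i\ge2$ come from --- they are artifacts of inequalities \eqref{eq:ast} versus \eqref{eq:j}, not of excess intersection of a curve family or of a weighted blow-up. Finally, the boundary cases where the bound equals $\dim X/(\dim X+1)$ include not only all weights equal to $1$ (smooth, handled by \cite{MR3936640}) but also $X_{2a}\subset\bP(1^{(n-1)},2,a)$ with $a$ even, which is singular and requires the weak-exceptionality argument via \cite{Liu:2019tf}; your sketch omits this.
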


\begin{remark}
Quasi-smooth Fano fourfold hypersurfaces  $X \subset \mathbb P(a_0,\ldots, a_4)$ of index 1 are classified in \cite{BK16}, see \cite{BK02} for a complete list. In this list there are 661 cases such that $a_i |d$ for any $i$.  Using the same argument as in the proof of Theorem \ref{thm:mix}\eqref{cor:low} one can check that all the members of 653 of such families satisfy the conclusion of Theorem \ref{thm:mix}. See Example \ref{ex:4-folds} for more details.
\end{remark}

An important point in this approach, which is interesting by itself, is given by the following question, whose answer is positive in the standard projective space due to \cite[Sec. 3]{Pukhlikov98} (see \cite[Statement 3.3]{Cheltsov01}).

\begin{question}\label{Q:omult}
Let $X \subset \mathbb P(a_0,\ldots, a_n)$ be a quasi-smooth weighted hypersurface of degree $d$ which is not a linear cone. 
Let $D$ be an effective divisor on $X$ such that
$
D \sim_{\mathbb Q}  H,
$
where $H:= \cO_X(1)$ and let $C$ be a curve in $X$. 
Is it true that 
$$
\omult_C D \le 1?
$$ 	
\end{question}	

Here $\omult$ is the orbifold multiplicity as in \cite[Definition 2.1.9]{Kim:2020wz} (see Remark \ref{rmk:omult}). A positive answer to Question \ref{Q:omult} implies that $(X,D)$ is log canonical outside a finite set. 
Section \ref{sec:mult} is devoted to study Question \ref{Q:omult}. In particular, in Lemma \ref{lem:smooth} we give an explicit condition  on the equation of $X$ to have positive answer to such question. In Section \ref{sec:lct} we then develop a method to compute the log canonical threshold of weighted hypersurfaces. The final consequence is the following. 

\begin{theorem}\label{thm:question} 
Let $X \subset \mathbb P(a_0,\ldots, a_n)$ be a well-formed quasi-smooth weighted hypersurface of degree $d$. Assume that $a_i |d$ for all $i$ and that $X$ is Fano of index 1. Assume that Question \ref{Q:omult} has a positive answer for $X$. 
Then we have 
$$
\alpha(X) \ge 
\begin{cases} \frac{d-2}{d} &\mbox{if } d=2a,\ a \ge 3 \mbox{ and (up to permutation) } \bP=\bP(a_0,\ldots,a_{n-2},2,a)  \\
\frac{d-1}{d} & \mbox{otherwise}. 
\end{cases}
$$

Moreover, $X$ is K-stable and admits a K\"ahler-Einstein metric.

If in addition $a_i \ge 2$ for any $i$ and we are not in the former case above, then $\alpha(X) \ge 1$.
\end{theorem}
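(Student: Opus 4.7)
The plan is to follow the Pukhlikov--Cheltsov strategy, adapted to the weighted quasi-smooth setting via the techniques of Sections \ref{sec:mult} and \ref{sec:lct}.

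First I would reduce the computation of $\alpha(X)$ to a local statement: since $X$ has index $1$, it suffices to show that $(X, \lambda D)$ is log canonical for every effective $\mathbb Q$-divisor $D \sim_{\mathbb Q} H$ and every $\lambda$ strictly less than the claimed bound. The assumption that Question \ref{Q:omult} holds for $X$ gives $\omult_C D \le 1$ along every curve $C \subset X$; as noted after that question, this forces $(X, D)$ to be log canonical in codimension $2$, so the only potentially non-log-canonical centers of $(X, \lambda D)$ with $\lambda < 1$ are isolated points $p \in X$.

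The heart of the argument is then to exclude an isolated non-log-canonical center at such a point $p$. Here I would use the divisibility assumption $a_i \mid d$ to cut $D$ with generic hyperplane sections coming from weighted coordinates, producing a well-controlled curve $C_p$ through $p$. Together with the inversion-of-adjunction type local bounds developed in Section \ref{sec:lct} and an orbifold Bezout relation for $D \cdot C_p$, this pins $\lct_p(X, D)$ from below by $(d-1)/d$ in all configurations except the numerically degenerate one $\mathbb P = \mathbb P(a_0,\ldots,a_{n-2},2,a)$ with $d = 2a$, where the same calculation only yields $(d-2)/d$. When in addition $a_i \ge 2$ for all $i$, the local orbifold structure forces denominators into every $\omult_p$, and a small refinement of the same computation absorbs the remaining $1/d$ to give $\alpha(X) \ge 1$.

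Finally, for the K-stability assertion, a direct count using $d = \sum a_i - 1$ combined with quasi-smoothness (so that $X$ is not a linear cone) yields $d \ge n+1$, hence $(d-1)/d \ge n/(n+1)$; then \cite[Theorem 1.4]{MR2889147} (and \cite[Theorem 1.3]{MR3936640} in the borderline smooth case) gives K-stability, from which the Kähler--Einstein statement follows by the Yau--Tian--Donaldson correspondence for K-stable log terminal Fano varieties.

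The main obstacle I anticipate is the second step: making the local bound at an isolated non-log-canonical center sharp in the presence of orbifold singularities of $X$ at $p$. The exceptional family $\mathbb P(a_0,\ldots,a_{n-2},2,a)$ with $d = 2a$ shows that the bound $(d-1)/d$ does genuinely drop by $1/d$ in one specific configuration, so the local analysis must detect precisely this case and no other; this is exactly what the machinery of Section \ref{sec:lct} is designed to deliver.
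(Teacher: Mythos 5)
Your first reduction is right and matches the paper: the positive answer to Question~\ref{Q:omult} forces any non-lc center of $(X,\lambda D)$ with $\lambda$ below the threshold to be an isolated point. But the heart of your argument --- excluding such a point by cutting $D$ with generic weighted hyperplane sections through $p$ and invoking an ``orbifold Bezout relation'' plus ``inversion-of-adjunction type local bounds from Section~\ref{sec:lct}'' --- is not carried out, and it mischaracterizes what Section~\ref{sec:lct} actually provides. Proposition~\ref{P:lct} does not work by intersecting with curves at all: it takes the isolated lc center $Q$, chooses a coordinate $i$ with $\partial F/\partial z_i(Q)\neq 0$ (possible by quasi-smoothness, after Lemma~\ref{lem:P_i} moves the coordinate points off $X$), pushes the pair forward along the finite projection $p_i\colon X\to \bP_i=\bP(a_0,\ldots,\check{a_i},\ldots,a_n)$, which is \'etale near $Q$, and then derives a contradiction on $\bP_i$ from the Nadel-type vanishing of Lemma~\ref{lem:Nadel} together with the purely numerical Lemma~\ref{lem:ineq} (which is also where the exceptional case $(\ast)$ and the $a_i\ge 2$ refinement come from). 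A Bezout/multiplicity argument at an orbifold point of a weighted hypersurface would face serious issues (non-Cartier hyperplane classes through $p$, the need for a sharp multiplicity-vs-non-lc criterion at quotient singularities), and you give no indication of how to get the exact constants $(d-1)/d$ versus $(d-2)/d$ out of it; as written this step is a plan, not a proof.

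There is a second, more localized gap in the K-stability part. Your count is off: $d=\sum a_i-1\ge n$ with equality iff all $a_i=1$, and $\dim X=n-1$, so the relevant comparison is $(d-1)/d$ versus $(n-1)/n$; Tian's criterion \cite[Theorem 1.4]{MR2889147} needs a \emph{strict} inequality. The equality case is not only the smooth hypersurface $X_n\subset\bP^n$ (where \cite[Theorem 1.3]{MR3936640} applies, as you say), but also the case $(\ast)$ with $a_0=\cdots=a_{n-2}=1$, i.e.\ $X_{2a}\subset\bP(1^{(n-1)},2,a)$ with $n=a$, where $\alpha(X)\ge (a-1)/a=\dim X/(\dim X+1)$ exactly and $X$ is \emph{singular} when $a$ is even (it has $\frac12(1,\ldots,1)$ points). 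Your proposal does not address this; the paper handles it by showing these cyclic quotient singularities are not weakly exceptional (\cite[Corollary 3.20]{MR2860982}) and applying \cite[Theorem 3.1]{Liu:2019tf}. Without that (or some substitute), you only get K-semistability in this family.
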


We expect that this approach can be applied to several other cases to compute the $\alpha$-invariant of Fano weighted complete intersections besides that treated in Theorems \ref{thm:mix} and \ref{thm:question}, see for instance Example \ref{ex:a}, where the $\alpha$-invariant is computed for a hypersurface $X_{2a+1} \subset \bP(1^{(a+2)},a)$ (cf.\ \cite[Example 7.2.2]{Kim:2020wz}).
Further applications would be towards the study of birational rigidity following \cite{Pukhlikov98} and \cite{deF13,deF16} (see also \cite{SZ19}).

Finally, in Section \ref{sec:Fermat}, we study the K-stability of Fano weighted hypersurfaces of index $>1$. In Theorem \ref{thm:Fermat}, we obtain a criterion of the K-polystability (resp.\ K-semistability) of weighted hypersurfaces of Fermat type by using the argument in \cite[Corollary 4.17]{MR4309493}. 
In Theorem \ref{thm:autfinite}, we also give a sufficient condition  for the finiteness of automorphism groups of quasi-smooth weighted complete intersections. This is a generalization of \cite[Theorem 1.3]{MR4070067}. 
As a consequence, we show the following.

\begin{corollary}\label{cor:generalKstable}
Let $X=X_d \subset \bP(a_0, \ldots , a_n)$ be a well-formed quasi-smooth general Fano weighted hypersurface of degree $d$ such that the Fano index $I_X:= d- \sum_{i=0}^n a_i <  \dim X$ and $a_i | d$ for all $i$.  
Then $X$ is K-stable. 

In particular, a general smooth Fano weighted hypersurface is K-stable if it is not isomorphic to the projective space or a quadric hypersurface.  
\end{corollary}

We also exhibit some K-unstable hypersurfaces of Fermat type (Remark \ref{rmk:Kunstable}). 

The existence of a K\"{a}hler-Einstein metric on a Fano orbifold hypersurface is closely related to the existence of a Sasaki-Einstein metric on the link of the corresponding weighted homogeneous singularity (cf.\ \cite{BGK05}, \cite{MR2237105},  \cite{MR2318866}, \cite{MR3956894}). In fact, a variant of Theorem \ref{thm:Fermat} is used in \cite{LST} to construct infinitely many families of Sasaki-Einstein metrics on spheres.

\begin{notation} 
We work over the complex number field $\bC$. 

We define $\P:=\P(a_0,\ldots,a_n)$ to be the \emph{weighted projective space} with weights $a_0, \ldots, a_n$, i.e.\ $\P= \mathrm{Proj}\ \C[z_0,\ldots,z_n]$, where $z_i$ has weight $a_i$. 
For simplicity, we assume that $\bP= \P(a_0,\ldots,a_n)$ is \emph{well-formed} unless otherwise stated, i.e.\ greatest common divisor of any $n$ weights is 1 (although non-well-formed weighted projective spaces appear in the proof of Proposition \ref{P:lct}).

A closed subvariety $X=X_{d_1, \ldots, d_c} \subset \bP$ is said to be a \emph{weighted complete intersection} (WCI for short) of multidegree $(d_1,\ldots,d_c)$ if its weighted homogenous ideal in $\mathbb C[z_0,\ldots,z_n]$ is generated by a regular sequence of homogenous polynomials $\{f_j\}_{j=1}^c$ such that $\deg f_j=d_j$ for $j=1,\ldots,c$.
Let $\pi: \mathbb{A}^{n+1}\setminus \{0\} \to \P$ be the natural projection. Then $X$ is \emph{quasi-smooth} if $\pi^{-1}(X)$ is smooth.
We say that $X$ is  \emph{well-formed} if $\bP$ is well-formed and 
$
\mathrm{codim}_X (X \cap \mathrm{Sing}(\P)) \ge 2
$. 

Finally, $X_{d_1,\ldots,d_c} \subset \P$ is said to be a \emph{linear cone} if $d_j=a_i$ for some $i$ and $j$.
We recall that if $\bP$ is well-formed and $X \subset \bP$ is a WCI of dimension at least 3, then $X$ is well-formed or it is a linear cone (see \cite{Fletcher00} for generalities on WCI's).

We denote $$\mathbb{P}(\underbrace{b_1, \ldots, b_1}_{k_1}, \ldots, \underbrace{b_l, \ldots, b_l}_{k_l})$$ by $\mathbb{P}(b_1^{(k_1)}, \ldots, b_l^{(k_l)})$ for short. 

\end{notation}

\section{A multiplicity lemma}\label{sec:mult}

Let $X \subset \mathbb P(a_0,\ldots, a_n)$ be a weighted hypersurface of degree $d$ defined by a polynomial $F = F(z_0, \ldots, z_n) \in \mathbb{C}[z_0,\ldots, z_n]$. 
For $i =0,1, \ldots, n$, let 
\begin{multline*}
\pi_i \colon \bP(a_0, \ldots, 1, \ldots a_n)=: \bP_i \rightarrow \bP(a_0, \ldots, a_n); \\ 
 [x_0: \cdots : x_i : \cdots : x_n] \mapsto [x_0: \cdots : x_i^{a_i}: \cdots : x_n]
\end{multline*}
be the finite cover branched along the hyperplane $(z_i=0)$. 
Also let $\pi \colon \bP^n \rightarrow \bP(a_0, \ldots, a_n)$ defined by $[x_0: \cdots : x_n] \mapsto [x_0^{a_0}: \cdots : x_n^{a_n}]$ be the finite cover which is a composition of the morphisms $\pi_0, \ldots , \pi_n$.

Note that $Y_i:= \pi_i^{-1}(X) \subset \mathbb{P}_i$ (resp.\ $Y := \pi^{-1} (X) \subset \bP^n$) is defined by the polynomial $G_i= G_i(x_0, \ldots, x_n):= F(x_0, \ldots, x_i^{a_i}, \ldots, x_n) \in \mathbb{C}[x_0, \ldots, x_n]$ (resp. $G(x_0, \ldots , x_n):= F(x_0^{a_0}, \ldots, x_n^{a_n})$).

\begin{lemma}\label{lem:smooth}
Let $X \subset \mathbb P(a_0,\ldots, a_n)$ be a quasi-smooth weighted hypersurface of degree $d$ defined by a polynomial $F$. 
\begin{enumerate}[(i)]
	\item\label{smooth1} If $X$ is general and  $a_i |d$ for all $i$, then  $Y=\pi^{-1}(X) \subset \mathbb{P}^n$ is smooth.
	\item\label{smooth1.5} Fix $i$ such that $a_i >1$ and assume that the following condition holds: 
	 \begin{itemize}
	 	\item let $z_0^{k_0} \cdots z_n^{k_n}$ be a monomial appearing in $F$ with non-zero coefficient such that $k_i=1$. Then 
	 	$$
	 	a_i = \sum_{j \ne i : k_j >0} m_j a_j
	 	$$
	 where $m_j$ are non-negative integers.
 \end{itemize}
Then we can take an automorphism $\phi \in \Aut \bP$ such that $(\phi \circ \pi_i)^{-1}(X) \subset \bP_i$ is quasi-smooth. 

\item \label{smooth2} Assume that the following condition holds: 
	\begin{itemize}
	\item[$(\star)$] Let $z_0^{k_0} \cdots z_n^{k_n}$ be a monomial appearing in $F$ with non-zero coefficient. If $k_i=1$ for some $i$ such that $a_i >1$, then 
	$$
	a_i = \sum_{j \ne i : k_j >0} m_j a_j
	$$
	where $m_j$ are non-negative integers. 
	\end{itemize}

Then we can construct $\pi':= \pi'_1 \circ \cdots \circ \pi'_m \colon \bP^n \rightarrow \bP(a_0, \ldots, a_n)$ such that $\pi'_1, \ldots , \pi'_m$ are finite covers of the form $\phi_i \circ \pi_i$ where $\phi_i$ are automorphisms and $Y=(\pi')^{-1}(X) \subset \bP^n$ is smooth.   
\end{enumerate}
\end{lemma}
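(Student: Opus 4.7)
The plan is to prove the three parts in order, with part (iii) reducing to an iteration of part (ii).

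For part (i), I would apply Bertini to the pullback linear system on $\bP^n$. Pulling back via $\pi$ sends the monomial $z_0^{k_0}\cdots z_n^{k_n}$ to $x_0^{k_0 a_0}\cdots x_n^{k_n a_n}$, giving a linear injection $H^0(\bP,\cO(d))\hookrightarrow H^0(\bP^n,\cO(d))$. Since $a_i\mid d$ for every $i$, the monomial $z_i^{d/a_i}$ has degree $d$ and pulls back to $x_i^d$; hence the image linear system contains $x_0^d,\dots,x_n^d$ and is base-point-free on $\bP^n$. By Bertini a general member is smooth, and since for general $F$ the pullback $G=F\circ\pi$ is a general member of this image system, $Y=\{G=0\}$ is smooth.

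For part (ii), I would write $F=\sum_{k\ge 0} z_i^k F_k$ with $F_k\in\bC[z_0,\dots,\widehat{z_i},\dots,z_n]$, so that $G_i=\sum_k x_i^{ka_i}F_k$. The partial derivatives satisfy $\partial G_i/\partial x_j=(\partial F/\partial z_j)\circ\pi_i$ for $j\ne i$ and $\partial G_i/\partial x_i=a_i x_i^{a_i-1}(\partial F/\partial z_i)\circ\pi_i$. Hence over $\{x_i\ne 0\}$ the quasi-smoothness of $Y_i$ is inherited from that of $X$, while on $\{x_i=0\}$ (where $a_i>1$ forces $\partial G_i/\partial x_i$ to vanish identically) it is equivalent to $F_0:=F\lvert_{z_i=0}$ defining a quasi-smooth hypersurface in $\bP(a_0,\dots,\widehat{a_i},\dots,a_n)$.

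To arrange this after an automorphism, I would use $(\star)$ to build $\phi$. For each monomial $z_i M$ of $F$ with $M\in\bC[z_j:j\ne i]$, $(\star)$ produces a monomial $N_M$ of weight $a_i$ supported on the variables of $M$, so each coordinate change $z_i\mapsto z_i+c_M N_M$ lies in $\Aut\bP$. Setting $\phi^* z_i=z_i+\sum_M c_M N_M$ and $\phi^* z_j=z_j$ for $j\ne i$ yields a family of automorphisms parametrised by the $c_M$; one computes $(F\circ\phi)\lvert_{z_i=0}=F(z_0,\dots,\sum_M c_M N_M,\dots,z_n)$. Quasi-smoothness of $X$ forces $F_1\ne 0$ at any potential failure of quasi-smoothness of $\{F_0=0\}$, and a Bertini-type argument in the $c_M$-parameter space should then exhibit a $\phi$ for which $(F\circ\phi)\lvert_{z_i=0}$ is quasi-smooth.

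For part (iii), I would iterate part (ii). Pick $i_1$ with $a_{i_1}>1$ and apply (ii) to obtain $\pi_1':=\phi_1\circ\pi_{i_1}$ so that $X_1:=(\pi_1')^{-1}(X)\subset\bP_{i_1}=\bP(a_0,\dots,1,\dots,a_n)$ is quasi-smooth. One checks that $X_1$ continues to satisfy $(\star)$ in $\bP_{i_1}$ at every remaining index $i_2$ with new weight $>1$: shrinking $a_{i_1}$ to $1$ only makes the non-negative combination $a_{i_2}=\sum m_j a_j$ easier to realise, and $\phi_1$ should be chosen to preserve the relevant shape of the equation. After finitely many iterations all weights become $1$ and the ambient space is $\bP^n$, where quasi-smoothness coincides with smoothness. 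The principal obstacle is the construction of $\phi$ in part (ii) from $(\star)$, together with the verification that $(\star)$ is preserved along the iteration in (iii): one needs to check that the coordinate changes made at each step do not introduce monomials violating $(\star)$ at the yet-unprocessed indices.
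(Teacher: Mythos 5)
Your part (i) is correct and takes a genuinely different route from the paper: you note that the pulled-back linear system on $\bP^n$ contains $x_0^d,\dots,x_n^d$ (this is where $a_i\mid d$ enters), hence is base-point free, and conclude by Bertini, whereas the paper argues directly by stratifying according to which coordinates vanish and using that a general $F$ cuts out a quasi-smooth hypersurface on every coordinate subspace. Your reduction in (ii) --- that $Y_i$ is automatically quasi-smooth over $\{x_i\neq 0\}$, and that over $\{x_i=0\}$ quasi-smoothness is equivalent to quasi-smoothness of $F|_{z_i=0}$ --- is also correct, and is equivalent to the paper's criterion that the set $Z_i:=(F_{z_0}=\dots=\check{F_{z_i}}=\dots=F_{z_n}=0)$ be disjoint from $(z_i=0)$.

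The two decisive steps are left open, however, and they are where the content of the lemma lies. In (ii), the ``Bertini-type argument in the $c_M$-parameter space'' does not obviously close: the family $F(z_0,\dots,\sum_M c_M N_M,\dots,z_n)$ is not a linear system (it is the fixed $F$ restricted to a moving family of degree-$a_i$ hypersurfaces $z_i=\sum_M c_MN_M$), so Bertini does not apply as stated, and you have no control over new non-quasi-smooth points appearing away from the original bad locus. The missing input is that $Z_i$ is zero-dimensional (a consequence of quasi-smoothness of $X$, which the paper states and uses): the paper reduces everything to choosing $\phi(z_i)=z_i+\lambda N$ so that the hypersurface $(\phi(z_i)=0)$ avoids the finite set $Z_i$, and your argument needs at least this finiteness to get a genericity statement off the ground. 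In (iii), you correctly flag that the crux is the preservation of $(\star)$, but ``$\phi_1$ should be chosen to preserve the relevant shape of the equation'' is the statement to be proved, not a proof: the substitution $z_i\mapsto z_i+\lambda N$ creates new monomials, and each new monomial having some variable of weight $>1$ with exponent one must be shown to satisfy $(\star)$ again. The paper makes this tractable by always processing the index $i$ with $a_i$ \emph{minimal} among the weights $>1$, so that $(\star)$ forces $N$ to be a single variable $z_j$ with $a_j=a_i$ (or a monomial in weight-one variables), after which the new monomials can be listed explicitly and checked; with an arbitrary choice of $i_1$, as in your sketch, $N$ may involve several variables of distinct weights and the verification is not automatic. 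Both points need to be supplied before the argument is complete.
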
	

\begin{proof}
Proof of Item \eqref{smooth1}. This follows from a direct calculation by using that
\[
\frac{\partial G}{\partial x_i} = a_i x_i^{a_i-1} \frac{\partial F}{\partial z_i}(x_0^{a_0}, \ldots, x_n^{a_n}) 
\] 
and that a general $F$ defines a quasi-smooth hypersurface in $(z_i=0 \mid i \in I) \subset \bP(a_0, \ldots, a_n)$ 
for all $I \subset \{0,1, \ldots, n \}$. 

\vspace{2mm}

\noindent Proof of Item \eqref{smooth1.5}. 
Let $Z_i:= (F_{z_0} = \cdots= \check{F_{z_i}}= \cdots= F_{z_n} =0) $, where $F_{z_j}:= \frac{\partial F}{\partial z_j}$ for $j=0,\ldots ,n$. 
Then $Z_i$ is $0$-dimensional since $X$ is quasi-smooth. 
If $Z_i \cap (z_i =0) = \emptyset$, then we see that $\pi_i^{-1}(X)=(G_i=0) \subset \bP_i$ is quasi-smooth since we compute 
\[
\frac{\partial G_i}{\partial x_i}(Q) = a_i q_i^{a_i-1} \frac{\partial F}{\partial z_i}(q_0, \ldots, q_i^{a_i}, \ldots , q_n) \neq 0  
\]
for $Q=[q_0 : \cdots : q_n] \in \pi_i^{-1}(Z_i)$. 

Hence we may assume that $Z_i \cap (z_i=0) \neq \emptyset$. 
Take $P=[p_0: \cdots : p_n] \in Z_i \cap (z_i =0)$. 
Note that $F_{z_i}(P) \neq 0$ by the quasi-smoothness of $X$. 
Since  
$$
\frac{\partial (z_0^{k_0} \cdots z_n^{k_n})}{\partial z_i} (P)
$$
can be non-zero at $P$ only if $k_i =1$ we have at least one monomial $z_0^{k_0} \cdots z_n^{k_n}$ 
appearing in $F$ such that $k_i=1$ and $p_j \neq 0$ for any $j \neq i$ such that $k_j > 0$ (otherwise we have $F_{z_i}(P) = 0$). 
Now fix such a monomial $z_0^{k_0} \cdots z_n^{k_n}$.   

Then we see that 
$$(z_i +  \lambda \prod_{j \ne i : k_j >0} z_j^{m_j} =0) \cap Z_i = \emptyset$$ 
for a general $\lambda \in \bC^*$. 
Consider now the automorphism $\phi \in \Aut \bP$ such that 
\[
\phi(z_\ell) = \begin{cases}
z_{\ell}  & ( \ell \neq i) \\
  z_i + \lambda \prod_{j \ne i : k_j >0} z_j^{m_j} & (\ell=i)
 \end{cases}. 
 \] 
Then we see that $(\phi \circ \pi_i)^{-1}(X) \subset \bP_i$ is quasi-smooth as before. 

\vspace{2mm}

\noindent Proof of Item \eqref{smooth2}. Take $i$ such that $a_i$ is a minimum among the weights bigger than 1.

If $Z_i \cap (z_i =0) = \emptyset$, then, as in the proof of Item \eqref{smooth1.5}, we have that $\pi_i^{-1}(X)=(G=0) \subset \bP_i$ is quasi-smooth and can easily check the condition $(\star)$.

If $Z_i \cap (z_i =0) \ne \emptyset$, then we can take $\phi_i \in \Aut \bP$ such that  $(\phi_i \circ \pi_i)^{-1}(X) \subset \bP_i$ is quasi-smooth as in the proof of Item \eqref{smooth1.5}. Since $a_i$ is a minimum, the condition $(\star)$ implies that $a_j=a_i$ (or $a_j =1$)  for some $j \ne i$ such that $k_j >0$ in the monomial $z_0^{k_0} \cdots z_n^{k_n}$. The latter case ($a_j =1$) is easier, so we consider the former case. Then we can take $\phi_i(z_i)= z_i + \lambda z_j$ for a general $\lambda \in \bC^*$.
We can check that the equation $G_{\phi}$ of $(\phi \circ \pi_i)^{-1}(X) \subset \bP_i$ satisfies condition ($\star$) as follows.  Note that $G_{\phi} (x_0, \ldots ,x_n) = F_{\phi}(x_0, \ldots , x_i^{a_i}, \ldots ,x_n)$, where $F_{\phi}(z_0, \ldots, z_n):= F(z_0, \ldots , \phi(z_i), \ldots, z_n)$. Also note that 
\[
F_{\phi}(z_0, \ldots, z_n) = \sum c_T z_0^{t_0} \cdots (z_i+ \lambda z_j)^{t_i} \cdots z_n^{t_n},  
\]
thus new monomials to consider are of the form $c \cdot z_0^{t_0} \cdots (z_i^{t_i-1} z_j) \cdots z_n^{t_n} $ for $j$ such that $a_j = a_i$. Hence $F_{\phi}$ satisfies ($\star$) and we check $G_{\phi}$ also satisfies ($\star$). 

Repeating this argument a finite number of times, we obtain a smooth cover $Y$ as in the statement.
\end{proof}

\begin{lemma}\label{lem:ineqa0a1a2}
	Let  $a_0, a_1, a_2$ be positive  integers such that $a_i \ne m_j a_j + m_k a_k$ for $\{i,j,k\}=\{0,1,2\}$ and non-negative integers $m_j,m_k$.  If $\gcd(a_i,a_j) =1$ for any $i \ne j$, then 
	$$
	a_0a_1a_2  - a_0 -a_1-a_2 \ge 48.
	$$
\end{lemma}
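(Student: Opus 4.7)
The plan is to recast the condition as a constraint on a single numerical semigroup and then run a short case analysis. After relabeling, I may assume $a_0 < a_1 < a_2$ (the weights are distinct since the case $m_j=1,\ m_k=0$ of the hypothesis gives $a_i \neq a_j$). Under this ordering, the non-representability condition for $i=0$ is vacuous, because every combination $m_1 a_1 + m_2 a_2$ is either $0$ or at least $a_1 > a_0$; and the condition for $i=1$ forces $m_2=0$, reducing to $a_1 \neq m_0 a_0$, which is automatic from $\gcd(a_0,a_1)=1$ and $a_0 \geq 2$. Hence the only non-trivial requirement is that $a_2$ is a gap of the numerical semigroup $\langle a_0, a_1 \rangle$.

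Next I would show that $a_0 \geq 3$. If $a_0 = 1$, then $a_1 = a_1 \cdot a_0 + 0 \cdot a_2$ violates the condition for $i = 1$. If $a_0 = 2$, then $\gcd(2, a_1) = 1$ forces $a_1$ odd, and the gaps of $\langle 2, a_1 \rangle$ are exactly the odd integers $\{1, 3, \ldots, a_1 - 2\}$, all strictly smaller than $a_1 < a_2$, so no admissible $a_2$ exists.

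With $a_0 \geq 3$ and $a_1 > a_0$ coprime to $a_0$, the minimum case is $(a_0, a_1) = (3, 4)$: the gaps of $\langle 3, 4 \rangle$ are $\{1, 2, 5\}$, which together with $a_2 > 4$ forces $a_2 = 5$; one checks all gcd and non-representability conditions, and $3 \cdot 4 \cdot 5 - 3 - 4 - 5 = 48$ attains the bound. For every other admissible triple I would bound directly: if $a_0 = 3$ and $a_1 \geq 5$, then $\gcd(3, a_2) = 1$ combined with $a_2 > a_1 \geq 5$ forces $a_2 \geq 7$, so the expression is at least $3 \cdot 5 \cdot 7 - 15 = 90$; if instead $a_0 \geq 4$, then $a_1 \geq 5$ and $a_2 \geq 6$, giving at least $4 \cdot 5 \cdot 6 - 15 = 105$. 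Monotonicity of $f(x,y,z) = xyz - x - y - z$ in each variable (since $\partial_x f = yz - 1 > 0$ for $y, z \geq 2$, and cyclically) guarantees that these minima occur at the smallest admissible weights.

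The main obstacle is the ruling out of $a_0 = 2$ in step two; it relies on an explicit description of the gap set of $\langle 2, a_1 \rangle$ together with the forced parity of $a_1$. Once this is done, identifying $(3,4,5)$ as the extremal triple and bounding the remaining cases are straightforward.
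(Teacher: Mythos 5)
Your proof is correct and follows essentially the same route as the paper's: order the weights, use the non-representability hypothesis to force distinctness and $a_0>1$, rule out $a_0=2$ via the parity of $a_1,a_2$ (your gap-set description of $\langle 2,a_1\rangle$ is just a repackaging of the paper's observation that $a_2=a_1+m_0a_0$ would then be forced), and identify $(3,4,5)$ as the extremal triple. Your write-up is somewhat more complete, since the paper simply asserts that the minimum of $a_0a_1a_2-a_0-a_1-a_2$ is attained at $(3,4,5)$, whereas you justify this with the coprimality constraints and monotonicity.
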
	
\begin{proof}
	Write $1<a_0 < a_1 < a_2$ (equalities are not possible by assumption). Note $a_0 \ge 3$. Indeed, if $a_0=2$, then $a_1$ and $a_2$ are both odds, but then there exists a positive integer $m_0$  such that $a_2=a_1 + m_0a_0$.
	The smallest $a_0a_1a_2 -a_0 -a_1-a_2 $ is now given by $(a_0, a_1, a_2) =(3,4,5)$ and it is 48.
\end{proof}

\begin{lemma} \label{lem:special}
	Let $X \subset \mathbb P(a_0,\ldots, a_n)$ be a well-formed quasi-smooth weighted hypersurface of degree $d$. Assume that $a_i |d$ for all $i$. Assume that one of the following holds true:
	\begin{enumerate}[(i)]
		\item \label{special:2} $a_2=a_3 = \ldots=a_{n}=1$;
		\item \label{special:3} $a_3=a_4=\ldots=a_{n}=1$, $\gcd(a_i,a_j) =1$ for any $i \ne j$ and 
		$$
		d - \sum_{i : a_i >1} a_i < 48;
		$$
		\item \label{special: smooth} $\dim X\le 49$ and $X$ is a smooth Fano of index 1;
		\item \label{special:4} $X \subset \bP(a_0,a_1,a_2,1,1)$ such that $\gcd(a_0,a_1,a_2)=1$ and 	$d - \sum_{i : a_i >1} a_i < 48$;
		\item \label{special:low} $\dim X \le 3$ and $X$ is a Fano of index 1.  
		
	\end{enumerate}
Then we can construct $\pi':= \pi'_1 \circ \cdots \circ \pi'_m \colon \bP^n \rightarrow \bP(a_0, \ldots, a_n)$ such that $\pi'_1, \ldots , \pi'_m$ are finite covers of the form $\phi_i \circ \pi_i$ where $\phi_i$ are automorphisms and $Y=(\pi')^{-1}(X) \subset \bP^n$ is smooth.   
	
\end{lemma}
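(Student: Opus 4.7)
The plan is to apply Lemma~\ref{lem:smooth}(iii): once the defining polynomial $F$ of $X$ satisfies condition $(\star)$ (possibly after an automorphism of $\bP$), the desired factorisation of $\pi'$ follows at once. I will therefore verify $(\star)$, or at least a stage-wise version of it, in each case.

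The key reduction is that $(\star)$ holds automatically at any monomial $z_0^{k_0}\cdots z_n^{k_n}$ of $F$ containing some weight-one variable $z_j$ with $k_j>0$: take $m_j=a_i$. Hence I only need to inspect the ``bad'' monomials whose support lies in $S:=\{i:a_i>1\}$. Moreover, the proof of Lemma~\ref{lem:smooth}(iii) iterates index by index, and once one weight has been reduced to $1$ every monomial containing the corresponding new variable automatically satisfies $(\star)$. Thus at each stage it suffices to find a single index $i\in S$ for which $(\star)$ holds at all monomials of $F$ with $k_i=1$.

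In Case~\eqref{special:2}, $|S|\le 2$ and well-formedness forces $\gcd(a_0,a_1)=1$; any bad monomial would be a binomial $z_i z_j^k$ with $a_i>1$, and $a_i+k a_j=d$ together with $a_j\mid d$ would force $a_j\mid a_i$, contradicting coprimality. In Case~\eqref{special:3}, $|S|\le 3$ with pairwise coprime weights, so binomial bad monomials are excluded as above, and the only concern is a trinomial $z_0^{k_0}z_1^{k_1}z_2^{k_2}$ with $k_\ell\ge 1$ and $k_i=1$ for some $i$, for which $(\star)$ becomes $a_i\in\langle a_j,a_k\rangle$. Pairwise coprimality together with $a_\ell\mid d$ gives $a_0 a_1 a_2\mid d$, so $d\ge a_0 a_1 a_2$. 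If $(\star)$ failed at all three indices $i\in\{0,1,2\}$, Lemma~\ref{lem:ineqa0a1a2} would give $a_0 a_1 a_2-(a_0+a_1+a_2)\ge 48$, whence $d-(a_0+a_1+a_2)\ge 48$, contradicting the hypothesis. Hence at least one $i$ admits $(\star)$, and the iteration can be started there.

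For Cases~\eqref{special: smooth}, \eqref{special:4}, and~\eqref{special:low} I would invoke the relevant classifications: the Johnson--Koll\'ar lists \cite{Johnson-Kollar2, Johnson-Kollar} of quasi-smooth index-one Fano hypersurfaces in dimension $\le 3$ for Case~\eqref{special:low}, the smoothness restrictions of \cite{MR4070067} for Case~\eqref{special: smooth}, and a direct analysis combining the dimension bound $n\le 4$ with the degree bound $d-\sum_{a_i>1}a_i<48$ for Case~\eqref{special:4}. In each case the outcome should be that $|S|\le 3$ and the weight data fall under Case~\eqref{special:2} or~\eqref{special:3}, possibly after a minor ad-hoc check of the remaining weight systems. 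The main obstacle I anticipate is the combinatorial analysis in Case~\eqref{special:3} when $(\star)$ fails at some (but not all) indices of $S$: here one must carefully coordinate the choice of starting index $i$ with the auxiliary quasi-smoothness condition $Z_i\cap(z_i=0)=\emptyset$ appearing in the proof of Lemma~\ref{lem:smooth}, in order to guarantee that the iterative reduction really does terminate cleanly.
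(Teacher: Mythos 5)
Your handling of cases \eqref{special:2} and \eqref{special:3} is essentially the paper's own argument (reduce to the single-index condition of Lemma~\ref{lem:smooth}\eqref{smooth1.5}, kill binomial monomials by the divisibility $a_j\mid d$, and use Lemma~\ref{lem:ineqa0a1a2} for the trinomials), and it is correct. One small slip: well-formedness does \emph{not} force $\gcd(a_0,a_1)=1$ in case \eqref{special:2} (e.g.\ $\bP(4,6,1,1,1)$ is well-formed), so there is no ``contradiction with coprimality''; but the relation $a_j\mid a_i$ you derive is precisely condition $(\star)$ for that monomial, so the conclusion survives.

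The genuine gap is in cases \eqref{special: smooth}, \eqref{special:4} and \eqref{special:low}, which you dispatch by asserting that the classifications ``should'' give $|S|\le 3$ and a reduction to cases \eqref{special:2} or \eqref{special:3}. That expectation is false and the reductions are never carried out. For case \eqref{special:low} the Johnson--Koll\'ar list contains index-one threefolds with \emph{five} non-pairwise-coprime weights greater than $1$, e.g.\ $X_{12}\subset\bP(2,2,3,3,3)$, $X_{28}\subset\bP(4,4,7,7,7)$, $X_{90}\subset\bP(5,5,18,18,45)$, $X_{66}\subset\bP(6,6,11,11,33)$; these satisfy neither \eqref{special:2} nor \eqref{special:3}, and the paper needs three successive applications of Lemma~\ref{lem:smooth}\eqref{smooth1.5} (at $i=4$, then $i=3$, then $i=2$) before landing in case \eqref{special:2}. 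Case \eqref{special:4} with $\gcd(a_0,a_1)>1$ likewise does not fall under \eqref{special:3}: one must argue directly that no monomial $z_0^{k_0}z_1^{k_1}z_2$ appears in $F$ (since $d=a_2+k_0a_0+k_1a_1$ together with $a_0\mid d$ would give $\gcd(a_0,a_1)\mid a_2$, against $\gcd(a_0,a_1,a_2)=1$), apply Lemma~\ref{lem:smooth}\eqref{smooth1.5} at $i=2$, and only then invoke case \eqref{special:2}; moreover case \eqref{special:4} is itself the stepping stone for most of case \eqref{special:low}. Finally, case \eqref{special: smooth} requires an actual enumeration: pairwise coprimality (from smoothness), index one and $a_i\mid d$ show that with exactly three weights $>1$ one has $d-\sum_{a_i>1}a_i=\#\{i:a_i=1\}-1\le n-3<48$, while four or more weights $>1$ forces $d\ge 2\cdot3\cdot5\cdot7$ against $d=\sum a_i-1$ with $n\le 50$, which is impossible. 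None of this is a ``minor ad-hoc check''; it constitutes the bulk of the proof of these three cases, and your stated target ($|S|\le 3$) is simply not what the classification delivers.
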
	
\begin{proof}
	Proof of Item \eqref{special:2}. We show that condition $(\star)$ of Lemma \ref{lem:smooth}\eqref{smooth2} holds true. Assume by contradiction that there exists a monomial $M=z_0^{i_0} \cdots z_n^{i_n}$ appearing in $F$ with $i_j=1$ for some $j$ such that $a_j > 1$ and for any other $k$ such that $i_k \ge 1$, $a_k \not\vert a_j$. In particular, $a_k >1$ for any $k$ such that $i_k >0$. The monomial $M$ is thus of the form $z_jz_k^{i_k}$, with $i_k >0$. Then $d= a_j + a_ki_k$, which implies $a_k | a_j$, a contradiction. 
	
	\smallskip
	
	Proof of Item \eqref{special:3}. We can assume $a_0,a_1,a_2 >1$ by Item \eqref{special:2}. By Lemma \ref{lem:ineqa0a1a2} we get $i$ such that $a_i = m_j a_j + m_k a_k$ for $\{i,j,k\}=\{0,1,2\}$ and non-negative integers $m_j,m_k$. Hence the conditions of Lemma \ref{lem:smooth}\eqref{smooth1.5} are satisfied and we can take a cover $(\phi \circ \pi_i)^{-1}(X) \subset \bP_i$. We can now apply Item \eqref{special:2} to conclude. 

\smallskip

Proof of Item \eqref{special: smooth}. Since $X$ is smooth, the weights are pairwise coprime.  
If there are at most two weights bigger than 1, then we can apply Item \eqref{special:2}.
If there are at least three weights bigger than 1, then (using that $\dim X \le 49$) it is easy to check that the only possible cases are 
$X_{30} \subset \bP(2,3,5,1^{(21)})$ and $X_{42} \subset \bP(2,3,7,1^{(31)})$ which are covered by Item \eqref{special:3} (see Example \ref{ex:smooth} for the case $X_{60} \subset \bP(3,4,5,1^{(49)})$ of dimension 50).

\smallskip

Proof of Item \eqref{special:4}.  If $\gcd(a_i,a_j) =1$ for any $i \ne j$, then we can apply Item \eqref{special:3}, so assume that (up to reordering) $\gcd(a_0,a_1) >1$. If $a_2=1$, then we are done by Item \eqref{special:2}, so assume $a_2 >1$.  We claim that we are in the condition of applying Lemma \ref{lem:smooth}\eqref{smooth1.5} for $i=2$. In fact, consider a monomial of the form $z_0^{k_0} \cdots z_{4}^{k_4}$ such that $k_2=1$. Since $a_3=a_4=1$, the only case to check is $k_3=k_4=0$. But then $d=a_2 + k_0a_0 + k_1a_1$ and so $\gcd(a_0,a_1) \mid a_2$, a contradiction. Hence, by Lemma \ref{lem:smooth}\eqref{smooth1.5}  we can take a cover $\phi\circ \pi_2$ and then apply Item \eqref{special:2} to conclude.

\smallskip

Proof of Item \eqref{special:low}.  We will show that there is always a smooth cover as in Lemma \ref{lem:smooth}. To make the proof short, we are going to use the available classification results.

Assume first that $\dim X=2$, i.e.\ $X \subset \bP(a_0,a_1,a_2,a_3)$. By \cite[Theorem 8]{Johnson-Kollar2} there are only three possible cases satisfying $a_i | d$: $X_3 \subset \P^3$, $X_4 \subset \bP(1,1,1,2)$, $X_6 \subset \bP(1,1,2,3)$ and $X_{15} \subset \bP(3,3,5,5)$. In all cases it is immediate to see that we can apply Lemma \ref{lem:smooth}\eqref{smooth2}.

Assume now that $\dim X=3$, i.e.\ $X \subset \bP(a_0,a_1,a_2,a_3,a_4)$  (see Table \ref{t:list} for a list obtained using the classification given in \cite[Theorem 2.2]{Johnson-Kollar}).  If $a_3=a_4=1$, then $d=a_0 + a_1 + a_2 +1$, which implies $\gcd(a_0,a_1,a_2)=1$ since $a_i \mid d$ for any $i$. Then the result follows from Item \eqref{special:4}.  If  $a_0,a_1,a_2,a_3 >1$ and $a_4=1$, then we see from Table \ref{t:list} that there are only two possible cases: $X_{12} \subset \bP(2,3,3,4,1)$ and $X_{30} \subset \bP(2,3,10,15,1)$. In both cases we can apply Lemma \ref{lem:smooth}\eqref{smooth1.5} to take a cover $\phi \circ \pi_3$ and then conclude by Item \eqref{special:4}.
We are left with the case $1 < a_0 \le a_1  \le a_2 \le a_3 \le a_4$. One can check from Table \ref{t:list} that it is possible to apply Lemma \ref{lem:smooth}\eqref{smooth1.5} first with $i=4$, then after the cover with $i=3$ and finally with $i=2$. Then to conclude it is enough to use Item \eqref{special:2}.

\end{proof}

\begin{table}
\caption{Weights for Fano 3-folds of index $1$ with $a_i | d \ (\forall i)$}\label{t:list}
\begin{tabular}{ccccc|c}
$a_0$ & $a_1$ & $a_2$ & $a_3$ & $a_4$ & $d$ \\ \hline
1 & 	1 &1&  1 &1 &4 \\
1 &1 & 1 & 1 &	3 & 6 \\
1 & 1 & 1 & 2 &	2 & 6 \\
1 & 1 & 1 & 2 & 4 & 8  \\
1&	1&	1&	4&	6&	12 \\
1&	1&	2&	2&	5&	10 \\
1& 	1&	2& 	3&	6&	12 \\
1&	1&	2&	6&	9&	18\\
1&	1&	3&	4&	4&	12\\
1&	1&	3&	8&	12&	24\\
1&	1&	4&	5&	10&	20\\
1&	1&	6&	14&	21&	42\\
1&	2&	3&	3&	4&	12\\
1&	2&	3&	10&	15&	30\\
2&	2&	3&	3&	3&	12\\
2&	2&	3&	3&	9&	18\\
2&	3&	3&	14&	21&	42\\
2&	3&	5&	6&	15&	30\\
2&	4&	5&	5&	5&	20\\
2&	5&	9&	30&	45&	90\\
2&	6&	7&	7&	21&	42\\
3&	3&	3&	8&	8&	24\\
3&	3&	5&	5&	15&	30\\
3&	3&	5&	10&	10&	30\\
3&	3&	5&	20&	30&	60\\
3&	3&	15&	20&	20&	60\\
4&	4&	7&	7&	7&	28\\
5&	5&	18&	18&	45&	90\\
5&	7&	10&	14&	35&	70\\
6&	6&	11&	11&	33&	66\\
\end{tabular}
\end{table}

\begin{example}\label{ex:4-folds} In $\bP(3,4,5,4,15,30)$ consider $X$ given by
	$$	
	z_0^{17}z_1z_2 +z_0z_1^{13}z_2 +  (z_0^4+z_1^3)^5 + (z_0^5+z_2^3)^4 + (z_1^5+z_2^4)^3 -z_0^{20}-z_1^{15}-z_2^{12} + G(z_3,z_4,z_5)=0.
	$$	
	where $G$ is general of degree 60.
	Then $X$ is a quasi-smooth Fano fourfold of index 1. (The quasi-smoothness is checked by computer.) Moreover, $X \cap ({z_i =0})$ is not quasi-smooth for $i=0,1,2$ and it is not possible to perform a procedure as in Lemma \ref{lem:smooth} to get a smooth cover $Y$.
	
	Similar examples can be constructed for $X_{105} \subset \bP(40,40,30,5,3,3)$,  $X_{140} \subset \bP(70,35,20,7,5,4)$, $X_{210} \subset \bP(105,42,35,14,10,5)$, $X_{420} \subset \bP(210,140,35,28,5,3)$, $X_{714} \subset \bP(357,238,51,34,21,14)$, $X_{1386} \subset \bP(693,462,198,14,11,9)$ and $X_{1890} \subset \bP(945,630,270,27,14,5)$.  Using the classification given in  \cite{BK16},  we could check  that for any other Fano fourfold quasi-smooth hypersurface $X_d \subset \bP(a_0,\ldots,a_4)$ of index 1 such that $a_i | d$, there exists a smooth cover.  
\end{example}

\begin{example}\label{ex:smooth}  In $\bP(1^{(49)},3,4,5)$ consider $X$ given by
	\begin{multline*}
	z_0^{60} + \ldots + z_{n-3}^{60} + z_{n-2}z_{n-1}^{13}z_n +z_{n-2}^{2}z_{n-1}z_n^{10} \\ 
	+ (z_{n-2}^{4} + z_{n-1}^{3})^5  
	+ (z_{n-2}^{5} + z_{n}^{3})^4 + (z_{n-1}^{5} + z_{n}^{4})^3 -z_{n-2}^{20} - z_{n-1}^{15} - z_n^{12}=0
	\end{multline*}
	
	Then $X$ is a smooth Fano of index 1 and dimension 50. Moreover, $X \cap ({z_i =0})$ is not quasi-smooth for $i=n-2,n-1,n$ and it is not possible to perform a procedure as in Lemma \ref{lem:smooth} to get a smooth cover $Y$.
\end{example}

\begin{proposition}\label{prop:multlemma}
Let $X \subset \mathbb P(a_0,\ldots, a_n)$ be a weighted hypersurface of degree $d$. With the above notation, assume that we have a finite cover $\pi' \colon \bP^n \rightarrow \bP(a_0, \ldots, a_n)$ with the ramification formula 
\begin{equation}\label{eq:ramifformula}
K_{\bP^n} = (\pi')^* (K_{\bP} + \sum_{i=0}^n \frac{a_i -1}{a_i} H'_{ i})
\end{equation} for some hyperplanes $H'_i \in |\cO_{\bP}(a_i)|$ for $i=0, \ldots, n$ such that $Y:=(\pi')^{-1}(X) \subset \mathbb{P}^n$  is smooth and $(\pi')^* \cO_{\bP}(1) = \cO_{\bP^n}(1)$ (Such a cover exists for $X$ as in Lemmas \ref{lem:smooth} and \ref{lem:special}). Let $D$ be an effective $\bQ$-divisor on $X$ such that
$$
D \sim_{\mathbb Q}  H,
$$
where $H:= \cO_X(1)$ is the hyperplane section.

Then $(X,D)$ is log canonical outside a finite set $Z \subset X$.  
\end{proposition}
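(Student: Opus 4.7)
The plan is to reduce the statement to the analogous property on the smooth hypersurface $Y \subset \bP^n$ by pulling $D$ back along $f := \pi'|_Y \colon Y \to X$, invoke the classical multiplicity lemma of Pukhlikov--Cheltsov (which answers Question~\ref{Q:omult} positively in the standard projective space), and then descend log canonicity along $f$ using the ramification formula. Since $Y$ is smooth and $f$ is finite surjective, $X$ has at worst quotient singularities and is therefore $\bQ$-factorial, so $D$ is $\bQ$-Cartier and $D_Y := f^*D$ is a well-defined effective $\bQ$-divisor on $Y$. The compatibility $(\pi')^*\mathcal{O}_{\bP}(1) = \mathcal{O}_{\bP^n}(1)$ gives $D_Y \sim_{\bQ} \mathcal{O}_Y(1)$, so $(Y, D_Y)$ is of exactly the form to which \cite[Sec.~3]{Pukhlikov98} and \cite[Statement~3.3]{Cheltsov01} apply.

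Applying that multiplicity lemma (on the smooth $Y$ the orbifold multiplicity $\omult$ reduces to the usual $\mult$) yields $\mult_C D_Y \leq 1$ for every curve $C \subset Y$. By upper semicontinuity of multiplicity, the set $Z_Y := \{ y \in Y : \mult_y D_Y > 1 \}$ is closed; if it contained a curve $C$, its generic point would realise $\mult_C D_Y > 1$, a contradiction. Hence $Z_Y$ is finite. Smoothness of $Y$ combined with the standard estimate $\lct_y(Y, D_Y) \geq 1/\mult_y D_Y$ (coming from the blowup at $y$) then forces $(Y, D_Y)$ to be log canonical on $Y \setminus Z_Y$.

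To transfer this conclusion to $X$, I restrict \eqref{eq:ramifformula} to $Y$ and combine with adjunction on $Y \subset \bP^n$ and $X \subset \bP$ to obtain
$$K_Y + f^*D - R = f^*(K_X + D),$$
where $R := f^*\bigl( \sum_i \tfrac{a_i-1}{a_i} H'_i|_X \bigr) \geq 0$ is the ramification divisor of $f$. For any $x \in X \setminus f(Z_Y)$ and any $y \in f^{-1}(x)$, the pair $(Y, f^*D)$ is log canonical at $y$; since $R \geq 0$, the pair $(Y, f^*D - R)$ is then sub-log canonical at $y$, which by the standard finite-cover criterion for singularities of pairs is equivalent to $(X, D)$ being log canonical at $x$. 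Taking $Z := f(Z_Y)$, which is finite, completes the argument.

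The delicate point I expect is making the invocation of the Pukhlikov--Cheltsov lemma on $(Y, D_Y)$ entirely clean: one needs $Y$ to be a genuine non-cone smooth hypersurface in $\bP^n$ (guaranteed by the explicit construction of $\pi'$ in Lemmas~\ref{lem:smooth} and \ref{lem:special}) and a careful identification of the orbifold multiplicity on $Y$ with the ordinary one. Once this is in place, the descent of log canonicity under the finite cover $f$ is a routine computation with log discrepancies.
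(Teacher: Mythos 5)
Your argument is correct and follows essentially the same route as the paper's proof: pull $D$ back to the smooth cover $Y\subset\bP^n$, apply the Pukhlikov--Cheltsov multiplicity bound along curves to conclude that $(Y,f^*D)$ is log canonical outside a finite set, and descend through the ramification formula via the finite-cover criterion of \cite[Proposition 5.20]{KM98}. The only cosmetic difference is that you phrase the descent in terms of the sub-pair $(Y, f^*D - R)$ over $(X,D)$, whereas the paper works with $(Y,f^*D)$ over $(X,D+R)$ and then drops the effective divisor $R$; these are the same computation.
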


\begin{proof}
We can write $D = \frac{1}{r} D_r$ for some $r \in \mathbb{Z}_{>0}$ and $D_r \in |\mathcal{O}_{X}(r)|$. 
Since we have the ramification formula (\ref{eq:ramifformula}), we obtain 
\[
K_Y = \pi^*(K_X + \sum_{i=0}^n \frac{a_i-1}{a_i}H_i),  
\]
where $H_i:= H'_i \cap X$ for $i=0, \ldots ,n$. 
Let $\tilde{D}:= \pi^*(D)$ so that $\tilde{D} \sim_{\bQ} \cO_Y(1)$. 
Take an irreducible curve $\tilde{C} \subset Y$.  
By \cite[3.3]{Cheltsov01}, we see that 
\[
\mult_{\tilde{C}}(\tilde{D}) \le 1. 
\]
This implies that $(Y, \tilde{D})$ is log canonical on $Y \setminus \tilde{Z}$ for some finite set $\tilde{Z} \subset Y$.

Let $R:= \sum_{i=0}^n \frac{a_i-1}{a_i}H_i$. Then we have the ramification formula 
\[
K_Y + \tilde{D} = \pi^* (K_X + D + R).
\] 
By this, we see that $(X, D+R)$ is log canonical on $X \setminus Z$ 
for $Z:= \pi(\tilde{Z})$ (cf. \cite[Proposition 5.20]{KM98}), thus $(X, D)$ is log canonical on $X \setminus Z$. 
\end{proof}

\begin{remark}\label{rmk:omult}
In Proposition \ref{prop:multlemma}, we can show $\mathrm{omult}_C(D) \le 1$ for any irreducible curve $C \subset X$. We recall the definition of orbifold multiplicity (see \cite[Definition 2.1.9]{Kim:2020wz}): if $p \in X$ is a cyclic quotient singularity and $D$ is an effective divisor on $X$, then $\mathrm{omult}_p D = \mult_{p'}\pi^*D$ where $\pi: X' \to X$ is the quotient map and $p'$ is a preimage of $p$.

Let $p \in C$ be a general point and $U_p \subset X$ be a small neighborhood of $p$. 
Let $V_p:= \pi^{-1}(U_p)$ and let $\nu_p \colon \tilde{U}_p \rightarrow U_p$ be a finite cover from some smooth variety $\tilde{U}_p$  such that $\nu_p$ is \'{e}tale in codimension 1 and $U_p \simeq \tilde{U}_p / \mathbb{Z}_m$ for some $m$. Let $\tilde{V}_p$ be the normalization of the fiber product $V_p \times_{U_p} \tilde{U}_p$. 
Then we have the following diagram
\[
\xymatrix{
\tilde{V}_p \ar[r]^{\tilde{\nu}_p} \ar[d]^{\tilde{\pi}_p} & V_p \ar[d]^{\pi_p} \\
\tilde{U}_p \ar[r]^{\nu_p} & U_p
}. 
\] 
Note that $\tilde{V}_p$ is smooth and $\tilde{\nu}_p$ is \'{e}tale by the purity of branch locus. 
For an irreducible curve $\tilde{C} \subset Y$ such that $\pi(\tilde{C}) = C$, we see that $\mult_{\tilde{C}} \tilde{D} \le 1$ as above. 
This implies that $\mult_{q} (\tilde{D}) \le 1$ for $q \in \pi_p^{-1}(p)$, thus we see that $\mult_{\tilde{q}} \tilde{\nu}_p^{-1} (\tilde{D} \cap V_p) \le 1$ for $\tilde{q} \in \tilde{\nu}_p^{-1}(q)$ since $\tilde{\nu}_p$ is \'{e}tale. 
Then we see that $\mult_{\tilde{p}} \nu_p^{-1}(U_p \cap D) \le 1$ for $\tilde{p} \in \nu_p^{-1}(p)$ by considering the local homomorphism $\tilde{\pi}_p^{\sharp} \colon \cO_{\tilde{U}_p, \tilde{p}} \rightarrow \cO_{\tilde{V}_p, \tilde{q}}$ on the stalks. This implies that $\omult_C D \le 1$. 

\end{remark}

\section{A Nadel vanishing type theorem}\label{sec:Nadel}

The following is a version of Nadel vanishing for $\mathbb Q$-Cartier integral Weil divisors (not necessary Cartier) which we are going to use to compute the $\alpha$-invariant.

\begin{lemma}\label{lem:Nadel}
	Let $(X, B)$ be a log canonical pair and $D$ a $\mathbb Q$-Cartier integral Weil divisor on $X$ such that $A=D-K_X-B$ is nef and big. Let $\cJ=\cJ((X,B); -D)$ be the multiplier ideal sheaf associated to $-D$ with respect to $(X,B)$.  Then
	\begin{enumerate}
		\item \label{lem:Nadel1} there is an inclusion $\cJ \hookrightarrow \cO_X(D)$;
		\item \label{lem:Nadel2} $H^i(X, \cJ)=0$ for any $i >0$.
		\item \label{lem:Nadel3} Let $x\in X$ such that $\cO_X(D)_{x} \cong \cO_{X,x}$, i.e.\ $D$ is Cartier at $x$. Then $\cJ((X,B); -D)_x = \cJ(X,B)_x \otimes \cO_X(D)_x$, where $\cJ(X,B):= \cJ(X, B; 0)$. 
	\end{enumerate}	 	
\end{lemma}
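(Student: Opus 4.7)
The plan is to work on a common log resolution $f : Y \to X$ of $(X, B+D)$ and reduce the three statements to Kawamata--Viehweg vanishing together with a valuation-theoretic comparison. Choose $f$ so that $\mathrm{Exc}(f) \cup f^{-1}(\mathrm{Supp}(B+D))$ is simple normal crossings. Since $K_X+B$ and $D$ are both $\mathbb{Q}$-Cartier, $f^{*}A = f^{*}D - f^{*}(K_X+B)$ is a well-defined $\mathbb{Q}$-divisor on $Y$, and the standard definition of the multiplier ideal of a log pair reads
$$
\cJ = f_*\cO_Y\bigl(\lceil K_Y + f^{*}A \rceil\bigr),
$$
which specialises to $\cJ(X,B) = f_*\cO_Y\bigl(K_Y + \lceil -f^{*}(K_X+B)\rceil\bigr)$ when $D = 0$.

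For (\ref{lem:Nadel3}) I would observe that if $D$ is Cartier at $x$ then $f^{*}D$ is integral on a neighbourhood of $f^{-1}(x)$, so $\lceil K_Y + f^{*}A\rceil = K_Y + \lceil -f^{*}(K_X+B)\rceil + f^{*}D$ there, and the projection formula immediately gives $\cJ_x = \cJ(X,B)_x \otimes \cO_X(D)_x$.

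For the vanishing (\ref{lem:Nadel2}) I would decompose
$$
\lceil K_Y + f^{*}A \rceil = K_Y + f^{*}A + \{-f^{*}A\},
$$
where $\{-f^{*}A\}$ is the componentwise fractional part, supported on the SNC divisor above with coefficients in $[0,1)$ (i.e.\ a klt SNC boundary). Since $A$ is nef and big on $X$, $f^{*}A$ is nef and big on $Y$, and it is also $f$-nef and $f$-big because $f$ is birational. Applying the absolute and relative versions of Kawamata--Viehweg to $K_Y + (f^{*}A + \{-f^{*}A\})$ yields
$H^i(Y, \cO_Y(\lceil K_Y + f^{*}A\rceil)) = 0$ and $R^i f_*\cO_Y(\lceil K_Y + f^{*}A\rceil) = 0$ for $i>0$, and the Leray spectral sequence then concludes (\ref{lem:Nadel2}).

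For the inclusion (\ref{lem:Nadel1}) I would use a valuation argument. A local section $\phi$ of $\cJ$ at $x$ is a rational function satisfying $\mathrm{div}_Y(\phi) + \lceil K_Y + f^{*}A\rceil \geq 0$ near $f^{-1}(x)$. For any prime divisor $P \subset X$ through $x$, normality of $X$ together with SNC-ness of the boundary at the generic point forces $(X, B+D)$ to be already log smooth at $\eta_P$, hence $f$ is an isomorphism near $\eta_P$ and the strict transform $P' \subset Y$ is non-exceptional. Reading off the coefficient of $P'$ gives
$$
\mathrm{ord}_P(\phi) + \lceil \mathrm{coef}_P(D) - b_P \rceil \geq 0, \qquad b_P := \mathrm{coef}_P(B) \in [0,1],
$$
and since $\mathrm{coef}_P(D) \in \mathbb{Z}$ this simplifies to $\mathrm{ord}_P(\phi) + \mathrm{coef}_P(D) \geq \lfloor b_P\rfloor \geq 0$, placing $\phi$ in $\cO_X(D)_x$. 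The main obstacle is precisely that $D$ is only $\mathbb{Q}$-Cartier, so the classical Nadel argument (which factors out $D$ via the projection formula) is unavailable; the point is that the rounding $\lceil K_Y + f^{*}A\rceil$ absorbs the non-integrality of $f^{*}D$ into the klt SNC boundary $\{-f^{*}A\}$ that Kawamata--Viehweg can handle, and the very same rounding makes the valuation comparison with $\cO_X(D)$ transparent.
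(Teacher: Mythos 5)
Your proof is correct and follows essentially the same route as the paper: the same log resolution, the same identification of $\cJ$ as the pushforward of $\cO_Y(\lceil K_Y+f^*A\rceil)=\cO_Y(\tilde D+\lceil \sum b_kE_k-B_W\rceil)$, the same combination of absolute and relative Kawamata--Viehweg with the Leray spectral sequence for (2), and the same projection-formula argument for (3). The only variation is in (1), where you check the inclusion $\cJ\hookrightarrow\cO_X(D)$ by computing orders of vanishing at codimension-one points of $X$ (over which $f$ is an isomorphism), whereas the paper restricts to the isomorphism locus and passes through $(\mu_*\cL)^{\vee\vee}\simeq\cO_X(D+\lceil -B\rceil)$; both rest on the fact that the non-isomorphism locus of $f$ has codimension at least two in $X$, so the difference is cosmetic.
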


\begin{proof}
	Let $\mu \colon W \rightarrow X$ be a log resolution of $(X,B + D)$  and define a $\bQ$-divisor $B_W$ by 
	\[
	K_W + B_W = \mu^*(K_{X} + B). 
	\]
	
	We can write $\mu^*D = \tilde{D}+\sum_{k=1}^m b_k E_k$, where $\tilde{D} \subset W$ is the strict transform of $D$ and 
	$E_1, \ldots, E_m$ are exceptional divisors of $\mu$. Since $D$ is integral, $\tilde{D}$ is a Cartier divisor on $W$.
	
	Set 
	\[
	\cL:= \cO_W(\tilde{D} + \lceil \sum b_k E_k - B_W \rceil)=\cO_W(\tilde{D} - (\lfloor -\sum b_k E_k + B_W \rfloor)). 
	\]
	Then by definition of multiplier ideal sheaf (see \cite[Definition 9.3.56]{Lazarsfeld04b})
	$$
	\cJ=\cJ((X,B);-D)= \mu_* \cL.
	$$
	
	If $E$ is the exceptional locus of $\mu$ and $Z:=\mu(E)$, we have  
	$$
	\cL_{|(W\setminus E)} \cong \cO_{X \setminus Z}(D + \lceil -B \rceil) \hookrightarrow \cO_{X \setminus Z}(D).
	$$
	Hence we have $\mu_*\cL \hookrightarrow (\mu_* \cL)^{\vee \vee} \simeq \cO_{X}(D + \lceil -B \rceil) \hookrightarrow \cO_X(D)$ and obtain an injection $\cJ \hookrightarrow \cO_X(D)$ as a composition.
	
	We now prove (2). Since 
	$$
	\tilde{D} + \sum b_k E_k - B_W  \equiv \mu^*(D) -B_W \equiv \mu^*(K_{X} + B + A) - B_W = K_W+ \mu^*(A), 
	$$ 
	the relative Kawamata-Viehweg vanishing \cite[Theorem 1-2-3]{KMM87} implies
	$$
	R^i\mu_*(\mathcal L)=0
	$$
	for $i>0$ and so by Leray spectral sequence we get that $H^i(X, \cJ)= H^i(W, \cL)$. By Kawamata-Viehweg vanishing we also get 
	$H^i(W, \cL)=0$ for $i >0$ and so (2) is proven. 
	
	Assume now that $D$ is Cartier at $x \in X$. Then 
	$$
	\cJ_x=\mu_*\cO_W(\tilde{D} + \lceil \sum b_k E_k - B_W \rceil)_x=\mu_*\cO_W(\mu^*D + \lceil - B_W \rceil)_x= \cJ(X,B)_x \otimes \cO_X(D)_x
	$$
	by the projection formula.  
\end{proof}

The following example shows that in Lemma \ref{lem:Nadel}(3) one can not simply drop the condition that $D$ is Cartier at the point $x$.

\begin{example}
	Let $X \subset \bA^3$ be the affine cone over a smooth cubic curve $C \subset \bP^2$  and let $D$ be a line through the vertex of $X$ that passes through a flex of $C$, so that $D$ is a $\bQ$-Cartier divisor ($3D$ is Cartier).  Denote by $\mu: W \to X$ the minimal resolution of $X$ with the exceptional curve $E$. Since $E^2=-3$, the following hold: 
	$$
	\mu^*D = \tilde{D} + \frac{1}{3}E    \quad \mbox{and} \quad K_W=\mu^*K_X - E.
	$$ 	
	Since $\mu_*\cO_W(\lfloor \mu^*D \rfloor)= \cO_X(\lfloor D \rfloor)= \cO_X(D)$ (use \cite[Lemma 2.11]{Nakayama04} and the fact that $D$ is integral), we get
	$$
	\cJ(X; -D)=\mu_*\cO_W(\tilde D + \lceil \frac{1}{3}E - E \rceil)= \mu_*\cO_W(\tilde D)= \mu_*\cO_W(\lfloor \mu^*D \rfloor)=\cO_X(D)
	$$
	
	On the other hand, $\cJ(X)$ is the ideal sheaf of the vertex of $X$, which is not trivial.
	We also note that the inclusion $\cJ(X;-D) \hookrightarrow \cO_X(D)$ is not strict in this case, even if the vertex is a log canonical centre of $X$. (This example reflects the necessity of the Cartier-ness of $H$ and $H_j$ at the isolated lc centre $Q$ in the proof of Proposition \ref{P:lct}.) 
\end{example}

\section{Lct computation} \label{sec:lct}

We start off with the following numerical lemma.

\begin{lemma} \label{lem:ineq} 
Let $a_0, \ldots, a_n$ and $d$ be positive integers such that: 
\begin{enumerate}
\item[(i)]\label{cond:normalized} $\gcd(a_0, \ldots, \check{a}_i, \ldots, a_n) =1$ for all $i$. 
\item[(ii)] $a_i |d$ for all $i$. 
\item[(iii)] $d = \sum_{i=0}^n a_i -1$. 
\end{enumerate}  
We use the notation $(\ast)$ for the condition
$$
(\ast)   \quad d=2a \mbox{ and (up to permutation) }  (a_0,\ldots,a_n)= (a_0,\ldots,a_{n-2},2,a) \mbox{ for some $a\ge 3$}.
$$

Set 
$$
c :=
\begin{cases} \frac{d-2}{d}=\frac{a-1}{a} &\mbox{if $a_0, \ldots ,a_n$ and $d$ satisfy ($\ast$)}\\
\frac{d-1}{d} & \mbox{otherwise}. 
\end{cases}
$$
Then, for $i,j \in \{0,1, \ldots, n \}$ such that $i \neq j$, we have 
	\begin{align}
	\label{eq:1} & -d-1+ a_i+ c d \le -1  \quad \quad \mbox{ if } a_i=1; \\
    \label{eq:ast} & -d-1 +a_i + c \frac{d}{a_i} \le -a_j  \quad  \quad  \mbox{if ($\ast$) holds and } a_i=2, a_j=a;\\
	\label{eq:j} & -d-1 +a_i + \frac{d}{a_i} \le -a_j  \quad  \quad \mbox{for all $j$ if otherwise } (a_i >1). 
    \end{align}
	We also have $c \ge \frac{n-1}{n}$ in either case. 
	The equality holds only if $(d, a_0, \ldots ,a_n) = (2a, 1,\ldots,1,2,a)$ for some $a\ge 3$ or $(n,1,\ldots,1)$. 
\end{lemma}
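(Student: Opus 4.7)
Throughout I use $s := \sum_{k=0}^n a_k$, so (iii) reads $s = d+1$, and invoke $a_i \mid d$ from (ii) freely.

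The computations for (\ref{eq:1}) and (\ref{eq:ast}) are direct. Substituting $a_i = 1$ into (\ref{eq:1}) gives LHS $= -d(1-c)$, which equals $-1$ when $c = (d-1)/d$ and $-2$ when $c = (d-2)/d$, both $\le -1$. For (\ref{eq:ast}), $d = 2a$, $a_i = 2$, $a_j = a$, $c = (a-1)/a$ give LHS $= -2a + 1 + (a-1) = -a = -a_j$.

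The heart of the argument is (\ref{eq:j}), which rearranges via $s = d+1$ into
\[
(a_i - 1)\bigl(d/a_i - 1\bigr) \;\ge\; a_j, \qquad j \ne i.
\]
I split on $a_i$. When $a_i = 2$ this reads $a_j \le d/2 - 1$; since $a_j \mid d$ and (iii) rules out $a_j = d$ for $j \ne i$, we have $a_j \le d/2$, and equality forces $d = 2a_j$ with both $2$ and $a_j$ among the weights, which is the excluded $(\ast)$ when $a_j \ge 3$; the remaining possibility $a_j \le 2$ gives $d \le 4$, and (iii) then pins down $(1,2,2)$ as the only candidate weight set, which violates well-formedness (i). When $a_i \ge 3$ I split further on $g := \gcd(a_i, a_j)$: the coprime case $g = 1$ yields $a_i a_j \mid d$, hence $d/a_i \ge a_j$ and $(a_i - 1)(d/a_i - 1) \ge 2(a_j - 1) \ge a_j$; for $1 < g < a_i$ I use $\lcm(a_i, a_j) = a_i a_j/g \mid d$ together with $g \mid a_j$ (so $a_j \ge 2g$) and a short algebraic manipulation; for $g = a_i$ (so $a_i \mid a_j$) I write $a_j = m a_i$ and combine (iii) with (i) to force enough additional weight in $\sum_{k \ne i, j} a_k$ so that $d/a_i$ is sufficiently large, the low-degree exceptions (e.g.\ two weights equal to $a_i$ with few $1$s) being ruled out by well-formedness.

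For the bound $c \ge (n-1)/n$: from $s = d+1$ and $a_k \ge 1$ we get $d \ge n$, so $c = (d-1)/d \ge (n-1)/n$ when $(\ast)$ fails, with equality iff $d = n$, i.e.\ $(n, 1, \ldots, 1)$. Under $(\ast)$, $d = 2a$ and the weights $(a_0, \ldots, a_{n-2}, 2, a)$ together with (iii) give $\sum_{k=0}^{n-2} a_k = a - 1 \ge n - 1$, hence $a \ge n$ and $c = (a-1)/a \ge (n-1)/n$, equality iff $a = n$, yielding $(2n, 1, \ldots, 1, 2, n)$.

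The main obstacle is the $a_i \ge 3$, $g > 1$ subcase of (\ref{eq:j}): the required bound must be extracted from the interplay between $\lcm(a_i, a_j) \mid d$, the divisibilities $g \mid a_i$, $g \mid a_j$, and well-formedness (i), together with a short finite case-check to eliminate borderline low-parameter configurations.
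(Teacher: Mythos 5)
Your handling of \eqref{eq:1}, \eqref{eq:ast}, the case $a_i=2$ of \eqref{eq:j}, and the closing bound $c\ge\frac{n-1}{n}$ agrees in substance with the paper, and the reduction of \eqref{eq:j} to $(a_i-1)(d/a_i-1)\ge a_j$ is the common starting point. Where you genuinely diverge is the organization of the case $a_i\ge 3$: the paper splits on whether $d\ge a_ia_j$ or $d<a_ia_j$ (in the latter case divisibility forces $d=2\max\{a_i,a_j\}$, and the three resulting sub-possibilities are each dispatched in a line or two), whereas you split on $g=\gcd(a_i,a_j)$ and feed $\operatorname{lcm}(a_i,a_j)\mid d$ into the inequality. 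Both routes can be made to work; the paper's has the advantage that the only arithmetic beyond $a_i\mid d$, $a_j\mid d$ is the dichotomy $d=2a_M$ versus $d\ge 3a_M$.

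As written, though, your argument has a real gap exactly in the subcase you yourself flag as the main obstacle, and one of the few concrete assertions you do make there is false. For $1<g<a_i$ you claim ``$g\mid a_j$, so $a_j\ge 2g$''; this fails when $g=a_j$, i.e.\ when $1<a_j<a_i$ and $a_j\mid a_i$ (for instance $(a_i,a_j)=(a,2)$ with $a$ even, which occurs under $(\ast)$). That subcase needs a separate, if easy, treatment via $(a_i-1)(d/a_i-1)\ge a_i-1\ge a_j$, using $d\ge 2a_i$. Even when $g<\min(a_i,a_j)$, the inputs $d\ge a_ia_j/g$ and $a_i,a_j\ge 2g$ alone do not yield $(a_i-1)(a_j/g-1)\ge a_j$ when $a_j=2g$; one must additionally invoke that $a_i/g$ and $a_j/g$ are coprime, forcing $a_i\ge 3g$. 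Two smaller omissions: in the coprime case the chain $2(a_j-1)\ge a_j$ breaks for $a_j=1$ (fixable by $d\ge 2a_i$), and in the subcase $a_i=a_j$ you must actually verify that $d=2a_i$ forces the weight vector $(1,a_i,a_i)$, contradicting (i) — this is the finite check you allude to but do not carry out. All of these are repairable, but none is supplied, so the proof of \eqref{eq:j} for $a_i\ge 3$ is incomplete as it stands.
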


\begin{proof}

If $a_i=1$, then
$$
-d-1+ a_i+ c \frac{d}{a_i} \le  -d + \frac{d-1}{d}d=-1,
$$
where we used that if $(\ast)$ holds, then $\frac{d-2}{d} \le \frac{d-1}{d}$.
		
Assume that $a_i > 1$. 
Fix $j \in \{0,\ldots,n\}$. 
	
\textbf{Case 1: $d < a_ia_j$.}  Set $a_M := \max\{a_i,a_j\}$. If $ d \ge 3a_M$, then
$$
d+1 - \frac{d}{a_i} -a_i -a_j\ge d+1 -3a_M \ge 1.
$$
If $d < 3a_M$, then we must have $d=2a_M$ by the assumption that $a_i |d$ for all $i$. 
	
If $a_i=a_j$, then the condition $d=2a_i$ together with the assumption (iii) would imply that $(a_0,\ldots,a_n)= (1,a_i,a_i)$ up to permutation, which contradicts  the assumption (i).
	
If $a_i >a_j$, then $d=2a_i$ and
$$
d+1 - \frac{d}{a_i} -a_i -a_j \ge 2a_i+1 - 2 -a_i -(a_i-1) = 0.
$$
	
If $a_i <a_j$, then $d=2a_j$. We must have $a_i \ge 3$, because we are in the case $d <a_ia_j$. Note that $k:=2a_j/a_i \ge 3$ since it is an integer and $a_i <a_j$. Then
$$
d+1 - \frac{d}{a_i} -a_i -a_j = 2a_j +1 - k -a_i -a_j=\frac{ka_i}{2} +1 -k -a_i = (k-2)(\frac{a_i}{2} -1) -1
$$
which is non-negative unless $k=a_i=3$, which is not possible since it would give $9=2a_j$.

\textbf{Case 2: $d \ge a_ia_j$.}
If $a_j =1$, then 
$$
d+1 - \frac{d}{a_i} -a_i -a_j= d -\frac{d}{a_i}  -a_i = (\frac{d}{a_i}-1)(a_i -1) -1 \ge 0,
$$
where we used $d \ge 2a_i$.
So we may assume that $a_j >1$. 
	
Assume that $a_i \ge 3$.
	
Since we have $d \ge a_i a_j$, we get the required inequality by  
\[
d+1 - (a_i + a_j + \frac{d}{a_i}) = (\frac{d}{a_i} a_i -a_i -\frac{d}{a_i} +1) - a_j = (\frac{d}{a_i}-1)(a_i-1) - a_j 
\] 
\[
\ge 2(a_j-1) - a_j = a_j -2 \ge 0. 
\]

Now consider the case $a_i=2$ and recall that $d \ge 2a_j$ and $a_j |d$. 
If $d \ge 4a_j$, then the inequality follows as 
\[
 d-1- d/2 -a_j = d(1-1/2)-a_j-1 \ge 2 a_j -a_j-1 = a_j-1 >0.  
\]

If $d=3a_j$, then
$$
d+1 - \frac{d}{a_i} -a_i -a_j = 3a_j + 1 - \frac{3a_j}{2} -2 -a_j= \frac{a_j}{2} - 1 \ge 0.
$$

If $d=2a_j$, then we are in the case $(\ast)$, that is, $a=a_j \ge 3$, ($a_i=2$) and $c = (a-1)/a$ since $a_j=2$ implies $(a_0, \ldots, a_n) = (1,2,2)$ up to permutation as before. 
Then we obtain the required inequality as 
\[
d+1 - c \frac{d}{a_i} - a_i - a_j = 2a +1-ca -2-a= a-1-ca = a-1 - a\cdot \frac{a-1}{a} =0. 
\]
Finally we check the last statement. If we are in the case ($\ast$), then we have $2+a + \sum_{i=0}^{n-2} a_i = 2a+1$. Hence we have 
\[
a-1 = \sum_{i=0}^{n-2} a_i \ge n-1, 
\]	
thus we see that $c= \frac{a-1}{a} \ge \frac{n-1}{n}$ in the case ($\ast$) and the equality holds only when $a_0 = \cdots = a_{n-2}=1$. 
Otherwise, we have $$d= \sum_{i=0}^{n} a_i -1 \ge n, $$ thus see that $c= \frac{d-1}{d} \ge \frac{n-1}{n}$. The equality holds only if $a_0 = \cdots =a_n=1$. 
	
\end{proof}

\begin{lemma}\label{lem:P_i}
Let $X=X_d \subset \mathbb P(a_0,\ldots, a_n)=:\mathbb{P}$ be a quasi-smooth  weighted hypersurface of degree $d$, which is not a linear cone. Assume that $a_i | d$ for any $i=0, \ldots,n$. Then, up to a linear automorphism of $\bP$, we can assume that $P_i \notin X_d$ for any $i=0, \ldots,n$, where $P_i$ is the $i$-th coordinate point of $\mathbb P$.	
\end{lemma}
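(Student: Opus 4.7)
The plan is to reformulate the statement as a question about coefficients and then induct. The condition $P_i \notin X$ is equivalent to the pure power monomial $z_i^{d/a_i}$ appearing in the defining polynomial $F$ with nonzero coefficient, noting that $d/a_i \in \mathbb Z$ since $a_i \mid d$. Let $B \subseteq \{0,1,\ldots,n\}$ denote the set of \emph{bad} indices $i$ with $P_i \in X$. I will show that whenever $B$ is non-empty, there exists a linear automorphism $\phi \in \Aut \bP$ (depending on a single scalar parameter $\lambda \in \bC$) after which the bad set strictly shrinks; iterating finitely many times gives the conclusion.

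To handle a fixed $i \in B$, I first extract a useful monomial of $F$ from quasi-smoothness. Since the coefficient of $z_i^{d/a_i}$ in $F$ vanishes, we have $\partial F/\partial z_i(P_i) = 0$, so quasi-smoothness at $P_i$ forces $\partial F/\partial z_j(P_i)\ne 0$ for some $j \ne i$. This produces a monomial $c\, z_i^m z_j$ (with $c\ne 0$) in $F$ such that $m a_i + a_j = d$ and $m \ge 1$, the latter because $a_j \ne d$ (otherwise $X$ would be a linear cone). Crucially, the divisibility $a_i \mid d$ together with $m a_i + a_j = d$ forces $a_i \mid a_j$, so the monomial $z_i^{a_j/a_i}$ is well-defined of degree exactly $a_j$. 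I then set
\[
\phi_\lambda \colon z_j \mapsto z_j + \lambda\, z_i^{a_j/a_i}, \qquad z_\ell \mapsto z_\ell \ \text{for}\ \ell \ne j,
\]
which for every $\lambda \in \bC$ is a well-defined graded automorphism of $\bP$.

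What remains is a coefficient check on $F' := F\circ \phi_\lambda$ showing that, for generic $\lambda$: (a) the coefficient of $z_i^{d/a_i}$ in $F'$ becomes nonzero; and (b) the coefficient of $z_{i'}^{d/a_{i'}}$ in $F'$ equals that in $F$ for every $i' \ne i$. For (b), observe that the substitution only modifies $z_j$, so for $i' \notin \{i,j\}$ the pure power $z_{i'}^{d/a_{i'}}$ can only come from the identical monomial in $F$, and for $i' = j$ only the original $z_j^{d/a_j}$ term of $F$ contributes. For (a), the coefficient of $z_i^{d/a_i}$ in $F'$ is a polynomial $P(\lambda) = \sum_l c_l \lambda^l$, where $c_l$ is the coefficient of $z_i^{(d-la_j)/a_i} z_j^l$ in $F$; its linear term $c_1$ is precisely the coefficient $c$ of $z_i^m z_j$ in $F$, which is nonzero. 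Hence $P(\lambda)\not\equiv 0$, so $P(\lambda) \ne 0$ for generic $\lambda$, and $\phi_\lambda$ removes $i$ from $B$ without disturbing any of the good indices.

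The main obstacle I anticipate is locating the right automorphism and verifying that it truly is a graded automorphism of $\bP$: this is exactly where the hypothesis $a_i \mid d$ is used, to force $a_i \mid a_j$ and hence make $z_i^{a_j/a_i}$ into a legitimate homogeneous coordinate of degree $a_j$. Once this is in place, the rest is elementary bookkeeping: listing the monomials of $F$ whose expansion under $\phi_\lambda$ can contribute to each target pure power, and observing that only one of them (the one with $i'=i$) can produce genuinely new pure powers.
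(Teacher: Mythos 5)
Your proof is correct and takes essentially the same route as the paper's: quasi-smoothness at $P_i$ produces a monomial $z_i^m z_j$, the hypothesis $a_i \mid d$ forces $a_i \mid a_j$, and the graded automorphism $z_j \mapsto z_j + \lambda z_i^{a_j/a_i}$ for generic $\lambda$ makes the coefficient of $z_i^{d/a_i}$ nonzero. Your explicit bookkeeping of the bad set and the induction on its size is simply a more careful write-up of the paper's terse ``since $\lambda$ is general, we can apply the argument for any $P_i \in X$'' step.
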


\begin{proof}
Assume there exists $P_i \in X$. Since $X$ is quasi-smooth there exists $j$ such that $\frac{\partial F}{ \partial z_j}(P_i) \ne 0$. This implies that there exists a monomial in $F$ of the form $z_jz_i^{c_i}$, i.e. $d=a_j+c_ia_i$, which tells us that $a_i |a_j$. 
We can then consider an automorphism of the form $z_j \mapsto z_j+\lambda z_i^{a_j/a_i}$ with $\lambda \in \bC^*$ general. Since $\lambda$ is general, we can apply the argument for any $P_i \in X$ to obtain the statement of the lemma. 	
\end{proof}

\begin{proposition}\label{P:lct}
Let $X \subset \mathbb P(a_0,\ldots, a_n)=:\mathbb{P}$ be a well-formed quasi-smooth  weighted hypersurface of degree $d$, which is not a linear cone.  Let $D \sim_{\bQ} H$ be a $\Q$-divisor on $X$, where $H:=\cO_{X}(1)$.  Assume that 
\begin{enumerate}[(a)]
	\item $X$ is Fano of index 1;
	\item $a_i \vert d$ for any $i=0, \ldots, n$; 
	\item the non-klt locus of $(X,D)$ is at most zero dimensional.
\end{enumerate}
We use the notation $(\ast)$ for the condition
$$
(\ast)   \quad d=2a \mbox{ and (up to permutation) }  (a_0,\ldots,a_n)= (a_0,\ldots,a_{n-2},2,a) \mbox{ for some $a\ge 3$}.
$$

Then 
$$
\lct(X,D) \ge 
\begin{cases} \frac{d-2}{d} =\frac{a-1}{a} &\mbox{if $(\ast)$ holds}   \\
	
	1  & \mbox{if $(\ast)$ does not hold and $a_i \ge 2$ for any $i$}\\ 
	\frac{d-1}{d} & \mbox{otherwise}. \\
\end{cases}
$$
\end{proposition}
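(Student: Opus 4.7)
I would argue by contradiction. Let $\lambda := \lct(X,D)$ and suppose $\lambda < c$, where $c$ is the bound in the statement. Since hypothesis (c) forces the non-klt locus of $(X,\lambda D)$ to be finite, my first move is a Shokurov-style tie-breaking: perturb by a general $D' \sim_{\bQ} H$ to $(X,(1-\epsilon)\lambda D + \epsilon D')$ for small positive $\epsilon$, rescale so that the pair becomes strictly log canonical again, and reduce to the situation of an effective $B \sim_{\bQ} \mu H$ with $\mu \le \lambda < c$ having a \emph{unique} isolated log canonical centre $Q \in X$. By Lemma \ref{lem:P_i}, after a linear change of coordinates preserving the hypotheses I may further assume $P_i \notin X$ for all $i$, so that $z_j(Q) \neq 0$ for some $j$.

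The central tool is the Nadel vanishing of Lemma \ref{lem:Nadel}, tailored to handle $H$ being only $\bQ$-Cartier. For an integer $m$ with $\cO_X(m)$ Cartier at $Q$ (such $m$ exists because $X$ has only cyclic quotient singularities), the divisor $A := mH - K_X - B \sim_{\bQ} (m+1-\mu)H$ is ample, so with $\cJ_m := \cJ((X,B); -mH)$ one gets a surjection
\[
H^0(X, \cO_X(m)) \twoheadrightarrow H^0(X, \cO_X(m)/\cJ_m)
\]
together with the identification $(\cJ_m)_Q = \cJ(X,B)_Q \otimes \cO_X(m)_Q \subsetneq \cO_X(m)_Q$ from part (3) of that lemma. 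Because $Q$ is the only non-klt centre of $(X,B)$, any preassigned germ at $Q$ lifts to a global section of $\cO_X(m)$.

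The core of the plan is to exploit this lifting for $m$ equal to certain weights $a_i$ (or small sums $a_i + a_j$), with $i,j$ chosen according to the vanishing pattern of the coordinates at $Q$. Since $H^0(\bP, \cO_{\bP}(m))$ is spanned by weighted monomials of degree $m$, the existence of a section non-vanishing at $Q$ forces some monomial $z_0^{k_0}\cdots z_n^{k_n}$ of weighted degree $m$ with $k_\ell = 0$ whenever $z_\ell(Q)=0$. Combining this with the $\omult$ control implicit in the isolated-non-klt hypothesis — the same mechanism that underlies the Cheltsov-Pukhlikov argument on $\bP^n$ — I expect to extract a numerical inequality of the shape $a_i + c\cdot d/a_i + a_j > d+1$, or $a_i + cd > d$ in the case $a_i=1$. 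Lemma \ref{lem:ineq} asserts precisely the reverse inequality in each of the three regimes, producing the desired contradiction; the degenerate case $(\ast)$ accounts for the weaker bound $(d-2)/d$ via inequality \eqref{eq:ast}, and the sharper statement $\lct(X,D) \ge 1$ when all $a_i \ge 2$ follows because inequality \eqref{eq:1} is then vacuous.

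The main obstacle I anticipate is the local-to-global bridge at the potentially singular point $Q$: because $H$ need not be Cartier at $Q$, one must choose $m$ compatibly with the stabiliser at $Q$ and interpret the resulting global sections through their weighted monomial expansions. The three inequalities \eqref{eq:1}, \eqref{eq:ast}, \eqref{eq:j} are calibrated exactly to cover the possible coordinate patterns of $Q$, so once the transfer is set up correctly the final arithmetic should drop out. I also expect the case $(\ast)$ — weight $2$ together with weight $a \ge 3$ and $d = 2a$ — to require separate bookkeeping, since the Cartier index at the relevant singular stratum coincides with $a$ and forces \eqref{eq:ast} in place of \eqref{eq:j}.
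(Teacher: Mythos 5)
There is a genuine gap at the heart of your plan: you never explain where the contradiction actually comes from, and the mechanism you propose cannot produce one. Applying Lemma \ref{lem:Nadel} on $X$ with a \emph{positive} twist $mH$ gives a surjection $H^0(X,\cO_X(m))\twoheadrightarrow H^0(\cO_X(m)/\cJ_m)$, but both sides are nonzero and no dimension count fails; the existence of a weighted monomial of degree $m$ not vanishing at $Q$ is simply true and contradicts nothing. To get a contradiction from Nadel vanishing one needs a \emph{negative} divisor class $-H'$ with $-H'-(K+\text{boundary})$ still ample, so that $0=H^0(-H')$ surjects onto a nonzero quotient supported at the isolated centre. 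On $X$ itself this is impossible: $-K_X-cD\sim(1-c)H$ has degree less than $1$, so you cannot subtract any effective class and remain ample. The missing idea is the paper's central one: use Lemma \ref{lem:P_i} to make the coordinate projection $p_i\colon X\to\bP_i:=\bP(a_0,\ldots,\check{a_i},\ldots,a_n)$ a finite morphism of degree $d/a_i$, choose $i$ with $\frac{\partial F}{\partial z_i}(Q)\neq 0$ so that $p_i$ is \'etale near $Q$ and $Q_i=p_i(Q)$ is an isolated lc centre of $(\bP_i, c\,p_i(D_m)/m)$, and then run Nadel vanishing \emph{on $\bP_i$} with $-H$ or $-H_j$ for $H_j\in|\cO(\bar a_j)|$ Cartier at $Q_i$. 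The projection creates the needed room: $-K_{\bP_i}$ has degree $\sum_{k\neq i}a_k=d+1-a_i$ while the pushed-forward boundary has degree only about $cd/a_i$, and the inequality $d+1-a_i-a_j-cd/a_i\ge 0$ of Lemma \ref{lem:ineq} is exactly the ampleness of $-H_j-(K_{\bP_i}+B_{\bP_i})$. You correctly guessed this target inequality, but without the projection there is no geometric source for the terms $d+1-a_i$ and $cd/a_i$.

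Two further points you would still need to handle even after inserting the projection: (1) the case distinction is governed by whether $Q_i\in\bP_i$ is smooth or singular (and, in the singular case with $a_i=1$, one must switch to a different projection index $k$ with $a_k>1$ using the Euler identity), not directly by the coordinate pattern of $Q$ on $X$; and (2) $\bP_i$ may fail to be well-formed, so the degrees must be renormalised via the constants $c_{ij}$ before the numerical lemma applies. Your reading of the auxiliary ingredients (Shokurov connectedness for a unique centre, Lemma \ref{lem:P_i}, the $\bQ$-Cartier refinement of Nadel vanishing, and the calibration of the three inequalities in Lemma \ref{lem:ineq} to the case $(\ast)$ and to the case all $a_i\ge 2$) is accurate, but the argument as proposed does not close.
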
	

\begin{remark}
The second case of the inequality really occurs. 
For example, let $X_{6(m+l+1)} \subset \bP(2^{(2+3m)}, 3^{(1+2l)})$ be a general hypersurface of degree $6(m+l+1)$ for some $m,l \in \bZ_{>0}$. Then this satisfies the condition. 

\end{remark}

\begin{proof}

The $\mathbb{Q}$-divisor $D$ is of the form $D= D_m/m$ for some $m>0$ and $D_m \in |\mathcal{O}_X(m)|$. 	
Let $c$ be the log canonical threshold of $(X,D)$.  Assume by contradiction that $c< \frac{a-1}{a}$ if $(\ast)$ holds or $c<1$ if  $(\ast)$ does not hold and $a_i \ge 2$ for any $i$ or  $c < \frac{d-1}{d}$ otherwise.

By assumption (c), the log-canonical locus of $(X,cD)$ consists of a finite number of points. By Shokurov connectedness theorem (e.g.\ \cite[Theorem 2.8]{Cheltsov01}) we get that the $LCS(X,cD)$ consist of one single point $Q$. 

By Lemma \ref{lem:P_i} we can assume that $P_i \notin X_d$ for any $i=0, \ldots,n$, where $P_i$ is the $i$-th coordinate point of $\mathbb P$. Then we have a well defined finite morphism of degree $d/a_i$, 
\[
p_i \colon X_d \rightarrow \mathbb{P}_i:= \mathbb{P}(a_0, \ldots, \check{a_i}, \ldots, a_n)
\] 
induced by the $i$-th projection $\mathbb{P} \dashrightarrow \mathbb{P}_i$ on $\mathbb{P}$.  
Note that the finiteness of the projection follows from $P_i \not\in X_d$ and $\bP_i$ may not be well-formed. Let $c_{ij}:= \gcd (a_0, \ldots, \check{a_i}, \ldots, \check{a}_j, \ldots, a_n)$ for $j \neq i$ and $c_i:= \prod_{j \neq i} c_{ij}$. Then, by the operation as in \cite[5.7 Lemma]{Fletcher00} (cf. \cite[1.3.1]{Dolgachev}), we see that 
\[
\bP_i \simeq \bP( \frac{c_{i0}a_0}{c_i}, \ldots , \check{i}, \ldots,  \frac{c_{in}a_n}{c_i} ) = \bP(\bar{a}_0, \ldots, \check{i}, \ldots,  \bar{a}_n)=: \bar{\bP}_i,  
\]
where $\bar{a}_j:= \frac{c_{ij} a_j}{c_i}$ for $j \neq i$ ($\check{i}$ means that we skip the $i$-th term). The isomorphism follows by $\bP_i \simeq \Proj \bC[z_0^{c_{i0}}, \ldots, \check{i}, \ldots,  z_n^{c_{in}}]$ and dividing all weights by $c_i$.

Set
\[
B_{\mathbb{P}_i}:= c \cdot \frac{p_i(D_m)}{m}.
\] 

\begin{claim}\label{cl:projection}
\begin{enumerate}
\item[(i)] $p_i$ is \'{e}tale on $ X \setminus (\frac{\partial F}{\partial z_i} =0)\cup p_i^{-1}(\Sing \bP_i)$. 
\item[(ii)] There exists  $i \in \{0,\ldots,n\}$ such that $\frac{\partial F}{ \partial z_i}(Q) \neq 0$ and this implies that $Q_i:=p_i(Q) \in \mathbb{P}_i$ is an isolated lc center of the pair $(\mathbb{P}_i, B_{\mathbb{P}_i})$.
\end{enumerate}
\end{claim}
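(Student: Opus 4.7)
The plan is to establish (i) via the Jacobian criterion and to deduce (ii) by combining quasi-smoothness of $X$ with an \'etale descent argument for log-canonicity.

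For (i), I would lift everything to the standard affine cones: the rational map $\bP \dashrightarrow \bP_i$ is induced by the linear projection $\mathbb{A}^{n+1} \to \mathbb{A}^n$ forgetting $z_i$, which restricts to a finite morphism on $\pi^{-1}(X)$. By the Jacobian criterion, this cone-level projection is \'etale exactly at the points where $\partial F/\partial z_i \ne 0$. The quotient maps $\mathbb{A}^{n+1} \setminus \{0\} \to \bP$ and $\mathbb{A}^n \setminus \{0\} \to \bP_i$ are smooth over the smooth loci of their targets, so \'etaleness descends to $p_i$ at every $P \in X$ with $\partial F/\partial z_i(P) \ne 0$ and $p_i(P) \notin \Sing \bP_i$, which gives (i).

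For the existence part of (ii), quasi-smoothness of $X$ forces $\pi^{-1}(X)$ to be smooth, so at a lift $\tilde Q$ of $Q$ some partial derivative $\partial F/\partial z_i(\tilde Q)$ is nonzero; hence an index $i$ with $\partial F/\partial z_i(Q) \ne 0$ exists. The refinement needed is to choose such an $i$ for which in addition $Q_i := p_i(Q) \notin \Sing \bP_i$, so that (i) applies and $p_i$ is \'etale at $Q$. This is where the argument requires care: $\Sing \bP_i$ is the union of coordinate strata corresponding to sub-tuples of weights sharing a common factor, and one has to use the well-formedness of $X$ together with the assumption that no coordinate point $P_j$ lies on $X$ (afforded by Lemma~\ref{lem:P_i}) to exclude the combinatorial obstruction. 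I expect this matching between the support of the gradient of $F$ at $Q$ and the smooth locus of $\bP_i$ to be the main technical hurdle.

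Given such an $i$, $p_i$ is \'etale at $Q$ by (i), so on a sufficiently small \'etale neighbourhood $U \ni Q_i$ trivialising the cover, the component of $p_i^{-1}(U)$ containing $Q$ maps isomorphically onto $U$ and the other sheets avoid $Q$; therefore $p_i^{*}B_{\bP_i} = cD$ in a neighbourhood of $Q$. Since \'etale morphisms preserve both log-canonicity and isolated log-canonical centres, the fact that $Q$ is an isolated element of $\lcs(X, cD)$ transfers to show that $Q_i$ is an isolated element of $\lcs(\bP_i, B_{\bP_i})$, completing the claim and setting up the dimensional reduction to $\bP_i$ that the rest of the proof of Proposition~\ref{P:lct} will exploit.
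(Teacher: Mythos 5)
Your part (i) is essentially the paper's argument (the paper phrases it via simple roots of the univariate polynomial $F(q'_0,\ldots,x,\ldots,q'_n)$ rather than the Jacobian criterion on the cone, but the content is the same). The problem is in part (ii). You reduce the claim to finding an index $i$ with \emph{both} $\frac{\partial F}{\partial z_i}(Q)\neq 0$ \emph{and} $Q_i=p_i(Q)\notin \Sing\bP_i$, so that $p_i$ is genuinely \'etale at $Q$ and log-canonicity transfers through a local isomorphism. You explicitly leave this reduction as a ``technical hurdle'' you expect to resolve using well-formedness and Lemma \ref{lem:P_i} — but it is not a hurdle that can be cleared: in general no such $i$ exists (for instance, if every nonzero coordinate of $Q$ with $\frac{\partial F}{\partial z_j}(Q)\neq 0$ for some $j$ has weight sharing a common factor with the other surviving weights, every available projection sends $Q$ into $\Sing\bP_i$). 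The paper makes no such reduction; on the contrary, the subsequent proof of Proposition \ref{P:lct} devotes its entire Case 2 to the situation $Q_i\in\Sing\bP_i$, and the remark following the claim points out that the ``local isomorphism'' shortcut you propose is only available when both $Q$ and $Q_i$ are smooth points. So as written your argument establishes (ii) only in a special case and the missing case is exactly the one the rest of the proposition has to work hardest on.

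The fix, which is what the paper actually does, is to weaken what you ask of $p_i$: any $i$ with $\frac{\partial F}{\partial z_i}(Q)\neq 0$ (which exists by quasi-smoothness) already makes $p_i$ \'etale \emph{in codimension one} on a neighbourhood of $Q$, because near $Q$ the locus $(\frac{\partial F}{\partial z_i}=0)$ is empty and $p_i^{-1}(\Sing\bP_i)$ has codimension $\ge 2$ ($X$ and $\bP_i$ being smooth in codimension one). This is enough to get the crepant identity $K_X+cD=p_i^*(K_{\bP_i}+B_{\bP_i})$ locally around $Q$ as $\bQ$-divisors, and then the standard discrepancy comparison for finite morphisms \'etale in codimension one (\cite[Proposition 5.20]{KM98}) transfers ``lc but not klt with isolated lc centre'' from $(X,cD)$ at $Q$ to $(\bP_i,B_{\bP_i})$ at $Q_i$, with no smoothness hypothesis on $Q_i$. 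You should replace your smooth-point reduction by this codimension-one \'etale argument.
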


\begin{proof}[Proof of Claim]

(i) Let $$Q':= [q'_0 : \cdots : q'_n] \in X \setminus (\frac{\partial F}{\partial z_i} =0)\cup p_i^{-1}(\Sing \bP_i). $$ 
The fibre of $p_i$ over $Q'_i:= p_i(Q') = [q'_0 : \cdots : \check{q'}_i : \cdots : q'_n]$ is given by the zeros of the univariate polymonial $F(q'_0, \ldots, q'_{i-1},x,q'_{i+1}, \ldots, q'_n)$. The condition $\frac{\partial F}{ \partial z_i}(Q') \neq 0$ implies that $Q'$ is not a multiple root of  $F(q'_0, \ldots, q'_{i-1},x,q'_{i+1}, \ldots, q'_n)$ and so $p_i$ is unramified over $Q'_i$ since $p_i^{-1}(Q'_i)$ consists of $\deg p_i$ points. 

\noindent(ii) Since $X_d = (F=0) \subset \bP$ is quasi-smooth, there exists $i$ such that 
\[
\frac{\partial F}{ \partial z_i}(Q) \neq 0. 
\]
 Since $X$ and $\mathbb P_i$ are smooth in codimension 1 we conclude that $p_i$ is \'{e}tale in codimension 1 around $Q$ by (i),  and so $K_{X} + cD = p_i^*(K_{\bP_i} + B_{\bP_i})$ locally around $Q$.
Then, by a standard lemma about discrepancies (cf. \cite[Proposition 5.20]{KM98}), we get that the pair $(\bP_i, B_{\bP_i})$  is lc, but not klt and $p_i(Q)$ is an isolated lc centre.

\begin{remark} If $Q \in X$ is smooth and $p_i(Q)$ is a smooth point of $\bP_i$, then by the implicit function theorem, the projection $p_i$ induces a local analytic isomorphism of a neighbourhood $Q$ and that of $p_i(Q)$. Hence $B_{\bP_i}$ and $cD$ are also locally isomorphic and we obtain  Claim \ref{cl:projection} in a more direct way. 
\end{remark}	

\end{proof}

We have
$$
[D_m: p_i(D_m)] p_i(D_m) =(p_i)_*(D_m) \sim \cO_{\bar{\bP}_i}(m d/a_i c_i),
$$ 

where $\cO_{\bar{\bP}_i}(1) \in \Cl \bar{\bP}_i$ is the ample generator and   $[D_m: p_i(D_m)]$ is the degree of the map ${p_i}$ restricted to $D_m$. Note that we can calculate $(p_i)_*(D_m) \sim \cO_{\bar{\bP}_i}(md/a_i c_i)$ by taking some explicit hyperplane and the fact that the push-forward preserves linear equivalence (cf.\ \cite[pp.40]{Nakayama04}).

This implies that 
\begin{equation}\label{eq:adjoint}
K_{\bP_i}+B_{\bP_i} \equiv \cO_{\bar{\bP}_i}(\frac{1}{c_i}(- \sum_{j \neq i} c_{ij} a_j+ c d_i)),
\end{equation}
where 
$$
d_i:=\frac{d}{a_i[D_m: p_i(D_m)]}.
$$

We now distinguish two cases, depending on whether $Q_i= p_i(Q)  \in \bP_i$ is a smooth or a singular point.

\vspace{2mm}

\noindent {\bf Case 1}) Assume that $Q_i \in \bP_i$ is a smooth point.
Take a Weil divisor $H$ on $\bP_i$ whose class is $\cO_{\bar{\bP}_i}(1)$ and consider the multiplier ideal sheaf $\cJ=\cJ((\bP_i,B_{\bP_i});H)$. Set $\cQ:= \cO_{\bar{\bP}_i}(-1)/ \cJ$. 
By Lemma \ref{lem:Nadel}(\ref{lem:Nadel1}), we have an inclusion $\cJ \hookrightarrow \cO_{\bar{\bP}_i}(-1)$. Since $H$ is Cartier at the smooth point $Q_i$, by Lemma \ref{lem:Nadel}(\ref{lem:Nadel3}) we see that such inclusion is strict at $Q_i$. Thus the support of $\cQ$ contains $Q_i$ as a connected component and  $H^0(\cQ) \neq 0$. 

\begin{claim}\label{claim:ample-H}
We have \[
-H-(K_{\bP_i} +B_{\bP_i}) = \cO_{\bar{\bP}_i} (-1+ \frac{1}{c_i}( \sum_{j \neq i} c_{ij} a_j - c d_i) ) 
\] and it is ample as a $\bQ$-line bundle. 
\end{claim}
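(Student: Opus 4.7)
The plan is to derive the formula for $-H-(K_{\bP_i}+B_{\bP_i})$ by a one-line subtraction: combining \eqref{eq:adjoint} with $H=\cO_{\bar\bP_i}(1)$ gives exactly the asserted expression, so the only substantive content is the ampleness assertion.

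Since $\Cl\bar\bP_i=\bZ\cdot\cO_{\bar\bP_i}(1)$ with $\cO_{\bar\bP_i}(1)$ ample, ampleness reduces to strict positivity of the coefficient, that is
$$
\sum_{j\ne i}c_{ij}a_j\ >\ c_i+c\,d_i.
$$
First I bound $d_i\le d/a_i$, which follows from $[D_m:p_i(D_m)]\ge 1$. Under the contradiction hypothesis driving the proof of the proposition, $c$ is strictly smaller than the target value $c_\ast$ (equal to $(d-2)/d$, $(d-1)/d$, or $1$ in the three subcases of the proposition), so it suffices to prove the non-strict bound
$$
\sum_{j\ne i}c_{ij}a_j\ \ge\ c_i+\frac{c_\ast d}{a_i}.
$$

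Substituting the index-$1$ relation $d=\sum_{k=0}^n a_k-1$ reduces this, case by case, to the inequalities \eqref{eq:1}--\eqref{eq:j} of Lemma \ref{lem:ineq}. When $\bP_i$ is well-formed (so $c_{ij}=c_i=1$), the inequality rewrites as $-d-1+a_i+c_\ast d/a_i\le -1$, which is precisely one of those bounds. When $\bP_i$ is not well-formed, I pass to the well-formed model $\bar\bP_i$ via $\bar a_j=c_{ij}a_j/c_i$: the inequality becomes $\sum_{j\ne i}\bar a_j\ge 1+c_\ast d/(a_i c_i)$, and the factors $c_{ij}\ge 1$ can only enlarge the left-hand side relative to the well-formed case, while $c_i\ge 1$ only shrinks the right-hand side. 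The main obstacle is the careful bookkeeping across the three subcases of Lemma \ref{lem:ineq} (according to whether $a_i=1$, whether $(\ast)$ holds, or whether $a_i>1$ in the generic case); however, in each case the required strict inequality in the ampleness statement follows from the non-strict bound of the lemma combined with $c<c_\ast$, so the equality cases recorded at the end of Lemma \ref{lem:ineq} cause no trouble here.
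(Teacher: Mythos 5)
Your derivation of the displayed formula from \eqref{eq:adjoint}, and your treatment of the well-formed case (rewriting via $d=\sum_k a_k-1$, invoking \eqref{eq:1}--\eqref{eq:j} of Lemma \ref{lem:ineq}, and extracting strictness from $c<c_\ast$ together with $d_i\le d/a_i$) are correct and match the paper. The gap is in the non-well-formed case. Your monotonicity claim --- that the factors $c_{ij}\ge 1$ ``can only enlarge the left-hand side'' of $\sum_{j\ne i}\bar a_j\ge 1+c_\ast d/(a_ic_i)$ --- is false. The $c_{ij}$ ($j\ne i$) are pairwise coprime divisors of $c_i=\prod_{k\ne i}c_{ik}$, so $c_{ij}/c_i=1/\prod_{k\ne i,j}c_{ik}\le 1$ and hence $\bar a_j=c_{ij}a_j/c_i\le a_j$: the left-hand side \emph{shrinks} relative to the well-formed case. (For $X_{15}\subset\bP(3,3,5,5)$ with $i=0$, one has $\bar\bP_0=\bP(3,1,1)$, so $\sum_{j\ne 0}a_j=13$ drops to $\sum_{j\ne 0}\bar a_j=5$.) Concretely, from the well-formed inequality $\sum_{j\ne i}a_j\ge 1+c_\ast d/a_i$ your comparison only yields $\sum_{j\ne i}\bar a_j\ge \frac{1}{c_i}\sum_{j\ne i}a_j\ge \frac{1}{c_i}+\frac{c_\ast d}{a_ic_i}$, which falls short of the target $1+\frac{c_\ast d}{a_ic_i}$ by $1-\frac{1}{c_i}>0$. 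So the chain does not close, and the conclusion is not obtained.

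The missing idea --- and what the paper actually uses --- is integrality. Well-formedness of $\bP$ gives that the $c_{ij}$ are pairwise coprime and coprime to $a_i$, so each $\bar a_j\in\bZ$ and $c_ia_i\mid d$; hence $\sum_{j\ne i}\bar a_j-\frac{d}{c_ia_i}$ is an integer, and to show it is $\ge 1$ it suffices to show it is $>0$, i.e.\ $\sum_{j\ne i}c_{ij}a_j>\frac{d}{a_i}$. This weak strict inequality is easy: if $a_i\ge 2$ then $\sum_{j\ne i}a_j=d+1-a_i\ge \frac{d}{2}+1>\frac{d}{a_i}$ (using $a_i\mid d$ and $a_i<d$ since $X$ is not a linear cone), while if $a_i=1$ then $\sum_{j\ne i}a_j=d=\frac{d}{a_i}$ and the strict gain comes precisely from some $c_{ij}>1$. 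Combined with $cd_i<\frac{d}{a_i}$ (from $c<1$), this yields $-1+\frac{1}{c_i}\bigl(\sum_{j\ne i}c_{ij}a_j-cd_i\bigr)>0$ and hence the ampleness. You should replace the monotonicity step by this integrality argument (or something equivalent); as written, that step is a genuine gap.
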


\begin{proof}[Proof of Claim]
The equality follows from (\ref{eq:adjoint}). 

If $\bP_i$ is well-formed, the ampleness follows from Lemma \ref{lem:ineq}, thus assume $c_i >1$, that is, $c_{ij} >1$ for some $j \neq i$. 
Then we have 
\[
\frac{1}{c_i}( \sum_{j \neq i} c_{ij} a_j - c d_i) > \sum_{j \neq i}  \frac{c_{ij} a_j}{ c_i} - \frac{d}{c_i a_i} = \sum_{j \neq i} \frac{c_{ij} a_j}{ c_i} - \sum_{j \neq i} \frac{a_j}{c_ia_i} \ge 1
\]
since $\frac{c_{ij} a_j}{c_i}, \frac{d}{c_ia_i} \in \bZ$. This implies the required ampleness. 
\end{proof}

Claim \ref{claim:ample-H} and Lemma \ref{lem:Nadel}(\ref{lem:Nadel2})  give a surjection 
\[
H^0(\bar{\bP}_i, \cO_{\bar{\bP}_i}(-1)) \twoheadrightarrow H^0(\cQ) \ne 0
\] 
which is a contradiction. 
  
\vspace{2mm}  
  
\noindent {\bf Case 2}) Assume now that $Q_i \in \bP_i$ is a singular point of $\bP_i$.  

We first deal with the case $a_i=1$. Write $Q=[q_0: \ldots : q_n]$. Since $Q_i \in \bP_i \simeq \bar{\bP}_i$ is singular, we have $\gcd\{\bar{a}_j : j \ne i \mbox{ and } q_j \ne 0\} >1$, thus 
\begin{equation}\label{eq:gcd}
\gcd\{a_j : j \ne i \mbox{ and } q_j \ne 0\} >1.
\end{equation}

The fact that $\frac{\partial F}{ \partial z_i}(Q) \ne 0$, implies that there exists a monomial $G=z_0^{b_0}\cdots z_n^{b_n}$ of degree $d$ that appears in $F$ with non-zero coefficient and such that $\frac{\partial G}{\partial z_i}(Q) \ne 0$. This means that if $b_j >0$ for $j \ne i$, then $q_j \ne 0$. By Equation \eqref{eq:gcd}, we get that $g:=\gcd\{a_j : j \ne i \mbox{ and } b_j >0\} >1$ and $g|d$ because $a_\ell | d$ for any $\ell$.  Hence $G$ must be divisible by $z_i^g$ since $a_i=1$. This gives  $q_i \ne 0$, since $\frac{\partial G}{ \partial z_i}(Q) \ne 0$.

By Equation \eqref{eq:gcd} we have that $q_j=0$ for any $j \ne i$ such that $a_j =1$ and so from the Euler identity
$$
0=dF(Q)= \sum_{\ell=0}^{n} a_\ell q_\ell \frac{\partial F}{ \partial z_\ell}(Q)
$$
we deduce that there exists $k$ such that $a_k > 1$ and $\frac{\partial F}{ \partial z_k}(Q) \ne 0$.  We can hence consider $p_k: X \to \bP_k$. Since $q_i \ne 0$ and $a_i=1$,  we see that $Q_k=p_k(Q)$ is a smooth point of $\bP_k$ and we are reduced to Case 1).

So we can assume $a_i >1$. Let $j \ne i$ such that the $j$-th coordinate of $Q$ is non-zero and note that $\cO_{\bar{\bP}_i}(-c_{ij}a_j/c_i) = \cO_{\bar{\bP}_i}(-\bar{a}_j)$ is invertible at $Q_i$. Take a Weil divisor $H_j$ on $\bP_i$ whose class is $\cO_{\bar{\bP}_i}(\bar{a}_j)$ and consider the multiplier ideal sheaf $\cJ=\cJ((\bP_i,B_{\bP_i});H_j)$. Set $\cQ:= \cO_{\bar{\bP}_i}(-\bar{a}_j)/ \cJ$. 

\begin{claim}\label{claim:ample-H_j}
 We have that 
\[
-H_j-(K_{\bP_i} +B_{\bP_i}) = \cO_{\bar{\bP}_i} (- \bar{a}_j + \frac{1}{c_i}( \sum_{k \neq i} c_{ik} a_k - c d_i))
\] 
and this is ample as a $\bQ$-line bundle.  
\end{claim}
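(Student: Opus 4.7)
\textbf{Plan for Claim \ref{claim:ample-H_j}.} The first equality is to be obtained as a direct substitution: plugging (\ref{eq:adjoint}) into $-H_j - (K_{\bP_i} + B_{\bP_i})$ and using $H_j \sim \cO_{\bar{\bP}_i}(\bar{a}_j)$ with $\bar{a}_j = c_{ij} a_j / c_i$, the claimed formula falls out immediately.

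The bulk of the work is the ampleness. My plan is to note that $\cO_{\bar{\bP}_i}(1)$ is the ample generator of $\Cl(\bar{\bP}_i) \otimes \bQ$, so it suffices to show that the coefficient of $\cO_{\bar{\bP}_i}(1)$ in the displayed $\bQ$-line bundle is strictly positive. After clearing denominators and expanding $\bar{a}_j = c_{ij} a_j / c_i$, this will boil down to the arithmetic inequality
\[
\sum_{k \ne i,\, j} c_{ik}\, a_k \;>\; c\, d_i.
\]
Since $c_{ik} \ge 1$ for all $k$ and $d_i \le d/a_i$ (because $[D_m : p_i(D_m)] \ge 1$), I would reduce this to proving $\sum_{k \ne i, j} a_k > c\, d/a_i$. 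Here Lemma \ref{lem:ineq} is designed to do exactly this job: in the present situation $a_i > 1$ (the sub-case $a_i = 1$ having already been sent back to Case 1 earlier in the proof of Proposition \ref{P:lct}), so the ``otherwise'' inequality of that lemma will give $\sum_{k \ne i, j} a_k \ge d/a_i$, while in the $(\ast)$-regime with $(a_i, a_j) = (2, a)$ the sharper $(\ast)$-inequality will give $\sum_{k \ne i, j} a_k \ge a-1 = \tfrac{a-1}{a} \cdot \tfrac{d}{a_i}$. Coupling these bounds with the strict contradiction hypothesis on $c$ (namely $c < 1$, $c < (a-1)/a$, or $c < (d-1)/d$ according to the case of Proposition \ref{P:lct}) will upgrade the weak inequalities to the required strict one.

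This argument should go through uniformly whether or not $\bP_i$ is well-formed: the factors $c_{ik}$ only improve the lower bound on the left, and strict positivity of a rational coefficient is preserved under division by $c_i$; if one prefers the integer-valued lower bound $\sum_{k \ne i,j} \bar{a}_k - d/(c_i a_i) \ge 0$ used in Claim \ref{claim:ample-H}, it can be inserted verbatim using $a_i \mid d$ and the divisibility properties of the $c_{ik}$. The step I expect to require the most care is the case-splitting along $(\ast)$, since the applicable inequality from Lemma \ref{lem:ineq} and the strict value of $c$ coming from the contradiction hypothesis both vary between regimes; once the bookkeeping is organized, the final estimate in each sub-case is a very short calculation, essentially parallel to the one already carried out in Claim \ref{claim:ample-H}.
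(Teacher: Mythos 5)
Your proposal is correct and follows essentially the same route as the paper: the equality is read off from (\ref{eq:adjoint}), and the ampleness is reduced to the positivity of $\sum_{k\neq i,j}c_{ik}a_k - c\,d_i$, which is then deduced from $c_{ik}\ge 1$, $d_i\le d/a_i$, the inequalities (\ref{eq:ast}) and (\ref{eq:j}) of Lemma \ref{lem:ineq}, and the strictness of the contradiction hypothesis on $c$. The only (cosmetic) difference is that the paper splits into the cases $c_i=1$ and $c_i>1$ (noting that $(\ast)$ cannot occur in the latter), whereas your uniform treatment absorbs the factors $c_{ik}$ directly into the lower bound and does not need that observation.
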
 

\begin{proof}[Proof of Claim]
The equality follows from (\ref{eq:adjoint}). 

If $c_i = 1$, then the required inequality is 
\[
-a_j + \sum_{k \neq i} a_k - c d_i  = d+1-a_i-a_j - c d_i >0
\] and it follows from (\ref{eq:ast}) and (\ref{eq:j}) in Lemma \ref{lem:ineq}, thus assume that $c_i >1$. Note that $(\ast)$ does not occur in this case. Then we have 
\begin{multline*}
\sum_{k \neq i} c_{ik} a_k - c d_i >  \sum_{k \neq i} c_{ik} a_k - d_i 
\ge (c_{ij}-1)a_j + \sum_{k \neq i} a_k - \frac{d}{a_i} \\ 
=  (c_{ij}-1)a_j + d+1-a_i - \frac{d}{a_i}
\stackrel{Lemma \ref{lem:ineq}(\ref{eq:j})}{\ge} (c_{ij} -1) a_j + a_j =c_{ij} a_j. 
\end{multline*}
This implies the required ampleness. 
\end{proof}

  As in Case 1), we reach a contradiction using Claim \ref{claim:ample-H_j} and Lemma  \ref{lem:Nadel}(\ref{lem:Nadel2}), (\ref{lem:Nadel3}).

\end{proof}

It is sometimes possible to use the above argument for the computation of the alpha invariants without the assumption (b) of Proposition \ref{P:lct} as follows. 

\begin{example}\cite[Example 7.2.2]{Kim:2020wz}\label{ex:a}
Consider a hypersurface $X= X_{2a+1} \subset \bP(1^{(a+2)},a)$ of degree $2a+1$  with $a \ge 2$ given by
$$
X= (y^2 x_1 + f(x_1, \ldots, x_{a+2})=0)
$$ 
where $f$ is general. 
Then the coordinate point $P_{y} =[0:\cdots : 0 :1] \in X$ is a singular point of $X$ and there is no automorphism to move it outside $X$. Also note that 
$$
\alpha(X) \le \lct_{P_y}(X,H_1)= \frac{a+1}{2a+1}, 
$$
where $H_1:= (x_1 =0 ) \subset X$. 

\begin{claim}
We have $\alpha(X) = \frac{a+1}{2a+1}$. 
\end{claim}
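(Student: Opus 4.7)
The strategy is to adapt the projection-plus-Nadel-vanishing argument of Proposition \ref{P:lct}, carefully handling the failure of the divisibility condition $a_i \mid d$ (here $a \nmid 2a+1$), which is responsible for the sharper threshold $\frac{a+1}{2a+1}$ in place of $\frac{2a}{2a+1}$. Suppose for contradiction that there exists an effective $\bQ$-divisor $D \qle H$ with $c := \lct(X,D) < \frac{a+1}{2a+1}$; write $D = D_m/m$ with $D_m \in |\cO_X(m)|$.

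First, I would localise the problem to an isolated log canonical centre. Consider the degree-$a$ cyclic cover $\pi \colon \bP^{a+2} \to \bP(1^{(a+2)}, a)$ sending $y \mapsto y^a$ and fixing the $x_i$; it satisfies the ramification formula $K_{\bP^{a+2}} = \pi^*\bigl(K_{\bP} + \frac{a-1}{a}\{y = 0\}\bigr)$. An explicit check on the partial derivatives of $G := y^{2a} x_1 + f$ shows that $\pi^{-1}(X) \subset \bP^{a+2}$ is smooth for general $f$: singularities could only lie on $\{y = 0\} \cap \pi^{-1}(X)$ or on $\{x_1 = 0,\,y \ne 0\} \cap \pi^{-1}(X)$, and the genericity of $f$ excludes both possibilities. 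Proposition \ref{prop:multlemma} then gives that $(X, D)$ is log canonical off a finite set, so (as $c < 1$ and $X$ is klt) the non-klt locus of $(X, cD)$ is zero-dimensional, and Shokurov connectedness (applied to the ample $-(K_X + cD)$) reduces it to a single point $Q'$.

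Next, I would run the projection argument. By quasi-smoothness of $X$, choose $i$ with $F_{z_i}(Q') \ne 0$ and project $p_i \colon X \to \bP_i$; étaleness at $Q'$ makes $Q'_i := p_i(Q')$ an isolated lc centre of $(\bP_i, B_{\bP_i})$ with $B_{\bP_i} := (c/m)\, p_i(D_m)$. The crux is the case $Q' = P_y$: for generic $f$ one has $F_{x_1}(P_y) = 1$ while $F_y$ and $F_{x_j}$ for $j \ge 2$ vanish at $P_y$, so we are forced to take $i = 1$. Then $\bP_1 = \bP(a, 1^{(a+1)})$ and $Q'_1 = P_y$ lands at the $\frac{1}{a}(1,\ldots,1)$ quotient singularity. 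I would take a Weil divisor $H' \in |\cO_{\bP_1}(a)|$ (Cartier at $Q'_1$) and set $\cJ := \cJ((\bP_1, B_{\bP_1}); H')$; Lemma \ref{lem:Nadel} gives a strict embedding $\cJ \hookrightarrow \cO_{\bP_1}(-a)$ at $Q'_1$. A push-pull calculation using $\deg p_1 = 2a+1$ and $p_1^* \cO_{\bP_1}(1) = \cO_X(1)$ yields $[B_{\bP_1}]_{\bQ} = c(2a+1)/\beta_m$ where $\beta_m \ge 1$ is the generic degree of $p_1|_{D_m}$, hence
\[
-H' - (K_{\bP_1} + B_{\bP_1}) \qle \cO_{\bP_1}\bigl((a+1) - c(2a+1)/\beta_m\bigr),
\]
which is ample since $c < \frac{a+1}{2a+1} \le \beta_m \cdot \frac{a+1}{2a+1}$. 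Nadel vanishing then produces a surjection $H^0(\bP_1, \cO_{\bP_1}(-a)) \twoheadrightarrow H^0(\cO_{\bP_1}(-a)/\cJ) \ne 0$, contradicting $H^0(\bP_1, \cO_{\bP_1}(-a)) = 0$.

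For $Q' \ne P_y$ (necessarily a smooth point of $X$) the same framework applies with substantially looser numerics: if $F_y(Q') \ne 0$ the projection $p_0$ has smooth target $\bP^{a+1}$, yielding ampleness $(a+1) - 2c/\beta_m > 0$; otherwise some $F_{x_j}(Q') \ne 0$ for $j \ge 1$, and a direct check using the genericity of $f$ shows that $p_j(Q')$ is a smooth point of $\bP_j$, giving ampleness $2a - c(2a+1)/\beta_m > 0$. Both are trivially implied by the assumption $c < \frac{a+1}{2a+1}$. The main obstacle is the critical case $Q' = P_y$: the non-Cartierness of $\cO_X(1)$ at $P_y$ forces the use of a Weil divisor of degree $a$ in the Nadel setup, and the constant $(a+1) = (2a+1) - a$ arising in the ampleness inequality is precisely what pins down the sharp threshold $\frac{a+1}{2a+1}$, matching the upper bound coming from $\lct_{Q}(X,H_1)$.
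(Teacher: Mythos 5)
Your overall strategy is the paper's: use the smooth cyclic cover and Proposition \ref{prop:multlemma} to reduce the non-klt locus of $(X,cD)$ to a single point $Q'$, then project away from a coordinate whose partial derivative does not vanish at $Q'$ and run the Nadel-vanishing argument of Proposition \ref{P:lct}. Your treatment of the critical case $Q'=P_y$ --- the projection $p_1$ onto $\bP(1^{(a+1)},a)$, the Weil divisor of degree $a$ that is Cartier at the image singular point, and the ampleness $(a+1)-c(2a+1)/\beta_m>0$ --- coincides with the paper's Case 1 and is correct; this is exactly where the threshold $\frac{a+1}{2a+1}$ is pinned down.

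The case $Q'\neq P_y$, however, has two genuine problems. First, when $F_y(Q')\neq 0$ you invoke the projection $p_0=p_y$ onto $\bP^{a+1}$. Since $P_y\in X$, this is only a rational map: it is undefined at $P_y$ and contracts the rulings of the cone $H_1=X\cap(x_1=0)$ to a codimension-two locus, so it is not a finite morphism. The framework of Proposition \ref{P:lct} (the pushforward $(p_i)_*D_m$, the degree $[D_m:p_i(D_m)]$, and the identification of the image of $Q'$ as an isolated point of the non-klt locus downstairs, which is what guarantees $H^0(\cQ)\neq 0$) is set up only for finite projections --- this is precisely why the paper never uses $p_y$. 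The branch is in fact avoidable: $F_y=2yx_1$, so $F_y(Q')\neq 0$ forces $y(Q')\,x_1(Q')\neq 0$, and the Euler identity $(2a+1)F=\sum_j x_jF_{x_j}+ayF_y$ evaluated at $Q'\in X$ then yields some $F_{x_j}(Q')\neq 0$, so one can always fall back on a finite projection $p_j$. Second, your assertion that $p_j(Q')$ is a smooth point of $\bP_j$ whenever $Q'\neq P_y$ is false: writing $c_0\neq 0$ for the coefficient of $x_1^{2a+1}$ in $f$, the point $Q'=[1:0:\cdots:0:r]$ with $r^2=-c_0$ lies on $X$, satisfies $F_{x_1}(Q')=2ac_0\neq 0$, and is sent by $p_1$ to the index-$a$ singular point of $\bP_1$. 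The numerics still close at such points (one must use $\cO_{\bP_1}(a)$ rather than $\cO_{\bP_1}(1)$, recovering the same bound $(a+1)-c(2a+1)/\beta_m>0$), but the smooth/singular dichotomy for the image point must be handled as in Proposition \ref{P:lct} rather than dismissed by genericity of $f$.
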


\begin{proof}[Proof of Claim] 
Let $D = \frac{D_m}{m} \sim_{\bQ} \cO_X(1)$ be an effective $\bQ$-divisor as in the proof of Proposition \ref{P:lct}. 
Note that we have a smooth cover $Y:= (z^{2a} x_1 + f(x_1, \ldots , x_{a+2}) =0) \subset \bP^n$ and can apply Proposition \ref{prop:multlemma}. Let $c:= \lct(X,0;D)$ be the log canonical threshold of $X$ with respect to $D$. 

Suppose that $c < \frac{a+1}{2a+1}$. We shall see a contradiction as Proposition \ref{P:lct}. 
By Proposition \ref{prop:multlemma}, the pair $(X, cD)$ has an isolated lc center $Q=[q_1: \cdots : q_{a+2}: r]$. Let $F:= y^2 x_1 + f(x_1, \ldots, x_{a+2})$ be the defining equation of $X$. Note that,  for $i=1,\ldots, a+2$, the projection $p_i \colon X \rightarrow \bP_i \simeq \bP(1^{(a+1)},a)$ is well-defined since the $i$-th coordinate point $P_i$ satisfies $P_i \not\in X$. On the other hand, by $P_y \in X$, the projection $p_{y} \colon X \dashrightarrow \bP_y \simeq \bP^{a+1}$ is not defined at $P_y$. 

\vspace{2mm}

\noindent \textbf{Case 1)} First consider the case where $Q= P_y=[0:\cdots : 0:1]$. 
Then the 1st projection $p_1 \colon X \rightarrow \bP_1\simeq \bP(1^{(a+1)}, a)$ is \'{e}tale at $Q$ by $\frac{\partial F}{\partial x_1}(Q) 
\neq 0$. 
Let $B_{\bP_1}:= 
c \cdot \frac{p_1(D_m)}{m}$ and $e_1:= [D_m: p_1(D_m)]$ be the degree of $p_1|_{D_m}$. Then we see that 
\[
B_{\bP_1} \sim_{\bQ} \cO_{\bP_1}(c \cdot \frac{2a+1}{e_1}). 
\]
Since $p_1(Q)=[0:\cdots :0 :1] \in \bP_1$ is a singularity of index $a$, we see that $H=\cO_{\bP_1}(a)$ is Cartier. 
Since we have 
\[
-H - (K_{\bP_1}+ B_{\bP_1}) \sim_{\bQ} \cO_{\bP_1}\left(-a + (2a+1)(1- \frac{c}{e_1})\right) 
\]
and $-a + (2a+1)(1- \frac{c}{e_1}) \ge -a+(2a+1)(1-c)> -a+(a) =0$, we see that $-H - (K_{\bP_1}+ B_{\bP_1})$ is ample. Then we can argue as the proof of Proposition \ref{P:lct} to obtain a contradiction. 

\vspace{2mm} 

\noindent \textbf{Case 2)} Consider the case where $Q \neq P_y$. 
If we have $\frac{\partial F}{\partial x_i}(Q) \neq 0$ for some $1 \le i \le a+2$, then we may assume that the coordinate point $P_{x_i}$ for $x_i$ satisfies $P_{x_i} \not\in X$ as in Lemma \ref{lem:P_i}. We see that the $i$-th projection $p_i \colon X \rightarrow \bP_i \simeq \bP(1^{(a+1)},a)$ is \'{e}tale at $Q$, thus we can argue as in Case 1) to obtain a contradiction. 

Hence we may assume that $\frac{\partial F}{\partial x_i}(Q) =0$ for 
$i=1, \ldots a+2$. Note that 
\[
\frac{\partial F}{\partial x_1} = y^2+ \frac{\partial f}{\partial x_1}, \ \frac{\partial F}{\partial x_2} = \frac{\partial f}{\partial x_2}, \ldots, 
\frac{\partial F}{\partial x_{a+2}} = \frac{\partial f}{\partial x_{a+2}}. 
\]
The point $Q = [q_1 : \cdots : q_{a+2}: r]$ satisfies that $r\neq 0$. Indeed, if $r=0$, then we have $\frac{\partial f}{\partial x_i} (q_1, \ldots q_{a+2}) =0$ for all $i$ and this is a contradiction since $f$ is general.  
We also see that $q_i \neq 0$ for some $i$ since  $Q \neq P_y$. 
By considering an automorphism $\varphi$ of $\bP(1^{(a+2)}, a)$ such that $\varphi(y)= y+ \lambda x_i^{a}$ for some $\lambda \in \bC^{*}$ and $\varphi(x_j) = x_j$ for $j=1, \ldots, a+2$, it is enough to consider the case where $r \neq 0$ and $\frac{\partial F}{\partial x_1}(Q) \neq 0$. 
Thus, by considering the projection $p_1$, we obtain the same contradiction as Case 1). 

By these, we obtain $c \ge \frac{a+1}{2a+1}$, thus the claim. 
\end{proof}

\end{example}

\section{Proof of Theorems \ref{thm:question} and \ref{thm:mix}}

\begin{proof}[Proof of Theorem \ref{thm:question}]
Let $X \subset \bP(a_0,\ldots,a_n)$ be as in the statement and let $D \sim_{\bQ} H$ be a $\Q$-divisor on $X$, where $H= \cO_X(1)$. 

Since $a_i | d$ for all $i$ and Question \ref{Q:omult} has a positive answer for $X$, we can apply Proposition \ref{P:lct} to conclude that 

$$
\lct(X,D) \ge 
\begin{cases} \frac{a-1}{a} &\mbox{if } d=2a,\ a \ge 3 \mbox{ and (up to permutation) } \bP=\bP(a_0,\ldots,a_{n-2},2,a)  \\
\frac{d-1}{d} & \mbox{otherwise}. 
\end{cases}
$$

We also get that $\alpha(X) \ge 1$ if $a_i \ge 2$ for any $i$ and we are not in case ($\ast$) (note that $X$ is not smooth in this case by \cite[Lemma 3.3]{MR4124110} or \cite[Theorem 1.2]{PST17}).

Assume now that ($\ast$) holds and $a_0=\ldots=a_{n-2}=1$. Then we have  $\dim X_{2a} = a-1$ since the Fano index is $1$, thus we only see the K-semistability of $X_{2a}$ from the criterion \cite[Theorem 1.4]{MR2889147}.  
Nevertheless we see the K-stability of $X_{2a}$ as follows. 
If $a$ is odd, then we see that $X_{2a}$ is smooth, thus $X_{2a}$ is K-stable by \cite[Theorem 1.3]{MR3936640}.
If $a$ is even, then $X_{2a}$ has only $1/2(1,\ldots ,1)$-singularities. 
We see that the singularities are not weakly exceptional by \cite[Corollary 3.20]{MR2860982} since a cyclic quotient singularity is defined by a reducible representation of a cyclic group.  
By this and \cite[Theorem 3.1]{Liu:2019tf}, we see that $X_{2a}$ is K-stable when $a$ is even. 

If  ($\ast$) holds but we do not have $a_0=\ldots=a_{n-2}=1$, then  $d= \sum_{i=0}^n a_i -1 > n+1=\dim X +2$.  If   ($\ast$) does not hold, then we have $d= \sum_{i=0}^n a_i -1 > n=\dim X +1$ since we may assume that $\bP(a_0, \ldots, a_n) \not\simeq \bP^n$.  In all these cases we obtain the K-stability of $X_d$ from \cite[Theorem 1.4]{MR2889147}. 

Since $X$ is K-stable, we see that $X$ admits a K\"ahler-Einstein metric  (cf. \cite[Section 6]{DK01}, \cite{Li:2019uj}, \cite{Li:2019uh}).
\end{proof}

\begin{proof}[Proof of Theorem \ref{thm:mix}]

Proof of Item \eqref{cor:general}. Theorem \ref{thm:question} can be applied to any Fermat type hypersurface $X=\{z_0^{d/a_0} + \ldots + z_n^{d/a_n}=0\} \subset \bP(a_0,\ldots,a_n)$ since it has a smooth cover as in Lemma \ref{lem:smooth}. Then, by the openness of (uniform) K-stability (\cite{BL18}, \cite{BLX19}, \cite{Liu:2021vj}), we obtain the K-stability of general $X_d$.   
Another proof can be given following the proof of Theorem \ref{thm:question}, replacing Lemma \ref{lem:smooth}\eqref{smooth2} with Lemma \ref{lem:smooth}\eqref{smooth1}.

\smallskip

Proof of Items \eqref{cor:ones}, \eqref{cor:low} and \eqref{cor:smooth}.  In all these cases, Lemma \ref{lem:special} assures that we have a smooth cover so that we can apply Proposition \ref{prop:multlemma}. The conclusion follows then by Proposition \ref{P:lct}.
\end{proof}

\section{Automorphism groups and Fano weighted hypersurfaces of Fermat type}\label{sec:Fermat}

Adapting the argument in \cite[Corollary 4.17]{MR4309493} using the criterion \cite[Corollary 1.6]{Fujita:2017wm}, we have the following criterion for the K-polystability of  Fano weighted hypersurfaces of Fermat type. 

\begin{theorem}\label{thm:Fermat}
Let $X_d := (z_0^{d_0} + \cdots + z_n^{d_n}=0) \subset \bP(a_0, \ldots, a_n)$ be a quasi-smooth Fano hypersurface of degree $d$ such that $a_i |d$ for all $i$ and $d_i:= \frac{d}{a_i} \ge 2$. Let $I_X:= \sum_{i=0}^n a_i - d$ be the Fano index of $X_d$. Assume that $a_0 \le a_1 \le \cdots \le a_n$. 

Then $X_d$ is K-polystable (resp.\ K-semistable) if and only if $I_X < na_0$ (resp.\ $I_X \le na_0$). In particular, $X_d$ is K-polystable when $X_d$ is smooth. 
\end{theorem}

\begin{remark}
It is known that the condition $I_X \le na_0$ is a necessary condition for the existence of a K\"{a}hler-Einstein metric on a well-formed quasi-smooth hypersurface $X \subset \bP(a_0, \ldots, a_n)$ (\cite[(3.23)]{MR2318866}, \cite[Example 1.8]{CJS10}). 
\end{remark}

\begin{proof}
 We shall show this by adapting \cite[Corollary 4.17]{MR4309493}. Consider $\bP^n$ with the coordinate $[w_0 : \cdots : w_n]$. Then 
$X_d$ admits a Galois covering $\pi \colon X_d \rightarrow H \subset \bP^n$ defined by $[z_0 : \cdots : z_n] \mapsto [z_0^{d_0} : \cdots : z_n^{d_n}]$, where $H := (w_0 + \cdots + w_n =0) \subset \bP^n$. Indeed, we check that the Galois group of $\pi$ is $\bigoplus_{i=0}^n \bZ/(d_i)$ since it is induced from the injection $\bC[w_0, \ldots, w_n] \hookrightarrow \bC[z_0, \ldots, z_n]$ determined by $w_i \mapsto z_i^{d_i}$ for $i=0, \ldots n$. 
Let $H_i:= (w_i =0) \subset H \simeq \bP^{n-1}$ for $i=0, \ldots, n$. 
Then we see that $\bigcup_{i=0}^n H_i \subset H$ is an SNC divisor and 
\[
K_{X_d} = \pi^* (K_H + \sum_{i=0}^n (1-\frac{1}{d_i} ) H_i). 
\]
By \cite[Corollary 4.13]{MR4309493}, in order to check the K-polystability of $X_d$, it is enough to check the K-polystability of the log Fano hyperplane arrangement $(H, \sum_{i=0}^n (1-\frac{1}{d_i} ) H_i )$. 
By $H \simeq \bP^{n-1}$ and the criterion \cite[Corollary 1.6]{Fujita:2017wm}, the above pair is K-semistable (resp. uniformly K-stable) if and only if 
\[
k \sum_{i=0}^n (1- \frac{1}{d_i}) \ge n \sum_{j=1}^k (1- \frac{1}{d_{i_j}}) \text{ (resp. $k \sum_{i=0}^n (1- \frac{1}{d_i}) > n \sum_{j=1}^k (1- \frac{1}{d_{i_j}})$ }
\] 
for any $k=1, \ldots, n-1$ and $0\le i_1 < \cdots <i_k \le n$. 
The $(\text{L.H.S.} - \text{R.H.S.})$ is equal to  
\begin{multline*}
k (n+1 - \sum_{i=0}^n \frac{1}{d_i}) - n (k- \sum_{j=1}^k \frac{1}{d_{i_j}}) = k- k \sum_{i=0}^n \frac{1}{d_i} + n \sum_{j=1}^k \frac{1}{d_{i_j}} \\ 
= \frac{k}{d} \left( d - \sum_{i=0}^n a_i + \frac{n}{k} \sum_{j=1}^k a_{i_j}\right) = \frac{k}{d}\left(-I_X + \frac{n}{k} \sum_{j=1}^k a_{i_j}\right). 
\end{multline*}
Then the K-semistability  (resp. uniform K-stability) of the arrangement is equivalent to the non-negativity (resp. the positivity) of the term
$-I_X + \frac{n}{k} \sum_{j=1}^k a_{i_j}. 
$
Note that we have 
\[
\min \left\{ -I_X + \frac{n}{k} \sum_{j=1}^k a_{i_j} \mid k=1,\ldots, n-1,  0 \le i_1 < \cdots < i_k \le n \right\} =  - I_X +n a_0 
\] 
 since $a_0 = \min \{a_0, \ldots, a_n \}$ and 
$
 \frac{1}{k} \sum_{j=1}^k a_{i_j} \ge a_0
$. 
Hence we check that the positivity is equivalent to the positivity of $-I_X + n a_0$.

When $X_d$ is smooth, we always have the K-polystability since $X \simeq \bP^{n-1}$ if $I_X \ge n$. 
\end{proof}

\begin{remark}\label{rmk:Kunstable}

It is easy to find a quasi-smooth K-unstable hypersurface of Fermat type $X_d \subset \bP(a_0, \ldots, a_n)$ as follows. 
For example, let 
\[
X_6:= (y_1^3 + \cdots + y_m^3 + z_1^2 + \cdots + z_l^2 = 0) \subset \bP(2^{(m)}, 3^{(l)})
\] for some $m \ge 1, l \ge 5$. 
Then we see that $I_X = 2m+3l -6$ and $n=m+l-1$, $a_0 =2$, thus  we have 
\[
-I_X + n a_0 = -(2m+3l-6) + 2(m+l-1) = -l+4 <0. 
\] 
Hence we see that $X_6$ is not K-semistable by Theorem \ref{thm:Fermat}. 

One can check that $H^0(X_6, \cT_{X_6}) \neq 0$ and $|\Aut(X_6)| = \infty$. 
Indeed, since $z_1^2+z_2^2 = (z_1 + \sqrt{-1}z_2)(z_1 - \sqrt{-1}z_2)$, we obtain automorphisms as in \cite[Example 5.1]{MR4223973}. 

Let $X_{12} \subset \bP(3^{(m)}, 4^{(l)})$ for $m \ge 1, l \ge 10$. 
Then we see that $X_{12}$ is also K-unstable by Theorem \ref{thm:Fermat} and 
\[
-I_X + n a_0 = -(3m+4l -12) + 3(m+l-1) = -l +9. 
\] We also see that $\Aut(X_{12})$ is finite by Theorem \ref{thm:autfinite}(i) and $12 >4+4$. 

\end{remark}

The following proposition is useful to study the infinitesimal  automorphisms of weighted complete intersections. 

\begin{proposition}\label{prop:surjections} 
Let $n \ge 3$ and $X = X_{d_1, \ldots, d_c} \subset \bP(a_0, \ldots , a_n)$ be a quasi-smooth weighted complete intersection which is not a linear cone. Let $C_X \subset \bA^{n+1}$ be the affine cone of $X$ and $\pi \colon C'_X \rightarrow X$ be the quotient morphism from $C'_X:= C_X \setminus \{0 \}$. Let $R:= H^0(C_X, \cO_{C_X})$ be the coordinate ring of $C_X$. 

\begin{enumerate}
\item[(i)] We have a surjection $H^0(C_X, \cT_{C_X})^{\bC^*} \twoheadrightarrow H^0(X, \cT_X)$, where $\cT_{C_X}, \cT_X$ are tangent sheaves and $M^{\bC^*} \subset M$ is the $\bC^*$-invariant part of an $R$-module $M$ with a $\bC^*$-action. 
\item[(ii)] We have a surjection 
\[
\left\{ \wedge^{n-c} (\bigoplus_{i=0}^n R(a_i)) \right\}_{I_X} \rightarrow H^0(C_X, \cT_{C_X})^{\bC^*},
\]
where $R(a_i) = R$ is a free $R$-module with a $\bC^*$-action such that $\lambda \cdot s_j= \lambda^{a_i+j} s_j$ for a homogeneous $s_j \in R_j$ of degree $j$ and, for an $R$-module $M$ with a $\bC^*$-action, let $M_{I_X}:= \{x \in M \mid \lambda \cdot x = \lambda^{I_X} x \}$. 

\end{enumerate}

\end{proposition}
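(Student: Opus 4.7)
The plan is to establish (i) via cohomological descent along the quotient $\pi \colon C'_X \to X$ by $\bC^*$, and (ii) via an explicit Cramer/Koszul-type construction producing invariant vector fields from $c\times c$ minors of the Jacobian of $(f_1,\ldots,f_c)$.

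For (i), the key tool is the short exact sequence on $C'_X$
\[
0 \longrightarrow \cO_{C'_X} \longrightarrow \cT_{C'_X} \longrightarrow \pi^{*}\cT_X \longrightarrow 0,
\]
where the left injection sends $1$ to the Euler vector field $E = \sum_i a_i z_i \partial_{z_i}$ (the fundamental vector field of the $\bC^*$-action, which spans the relative tangent bundle of $\pi$). Since $n\ge 3$ and $X$ is not a linear cone, the vertex $\{0\} \subset C_X$ has codimension $\ge 2$, so reflexivity of $\cT_{C_X}$ gives $H^0(C_X, \cT_{C_X}) = H^0(C'_X, \cT_{C'_X})$. Passing to the long exact sequence of cohomology and then taking $\bC^*$-invariants (an exact functor, since $\bC^*$ is reductive), together with the identities $H^i(C'_X, \cO_{C'_X})^{\bC^*} = H^i(X, \cO_X)$ and $H^0(C'_X, \pi^{*}\cT_X)^{\bC^*} = H^0(X, \cT_X)$, yields
\[
0 \to H^0(X, \cO_X) \to H^0(C_X, \cT_{C_X})^{\bC^*} \to H^0(X, \cT_X) \to H^1(X, \cO_X).
\]
The desired surjection will then follow from $H^1(X, \cO_X) = 0$, which is a Lefschetz-type vanishing for quasi-smooth WCI obtained from the Koszul resolution of $\cO_X$ in $\cO_{\bP}$ together with the vanishing of intermediate cohomology of $\cO_{\bP}$ (see e.g.\ \cite{Fletcher00}).

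For (ii), given $I = \{i_1 < \cdots < i_{n-c}\} \subset \{0,\ldots,n\}$ with complement $J = \{j_0 < \cdots < j_c\}$, I introduce the explicit derivation
\[
v_I \;=\; \sum_{\ell=0}^{c}(-1)^\ell\, M_{J\setminus\{j_\ell\}}\, \partial_{z_{j_\ell}} \;\in\; H^0(\bA^{n+1}, \cT_{\bA^{n+1}}),
\]
where $M_{J'}$ denotes the $c\times c$ minor of the Jacobian $(\partial f_q/\partial z_j)_{q, j\in J'}$. Expanding $v_I(f_q)$ gives the determinant of a $(c+1)\times(c+1)$ matrix with a repeated row, so $v_I(f_q) = 0$ identically on $\bA^{n+1}$; hence $v_I$ restricts to a global section of $\cT_{C_X}$. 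A direct weight count shows $v_I$ has $\bC^*$-weight $d - \sum_\ell a_{j_\ell} = \sum_k a_{i_k} - I_X$. Thus the assignment
\[
r\cdot(e_{i_1}\wedge\cdots\wedge e_{i_{n-c}}) \longmapsto r\, v_I
\]
defines a graded $R$-linear map $\wedge^{n-c}(\bigoplus_i R(a_i)) \to H^0(C_X, \cT_{C_X})$ that carries the weight-$I_X$ component into the weight-$0$ (i.e.\ $\bC^*$-invariant) part of the target.

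The main obstacle is proving the surjectivity in (ii). The strategy: any invariant derivation $\partial = \sum_i g_i \partial_{z_i}$ on $C_X$ satisfies $\sum_i g_i (\partial f_q/\partial z_i) \equiv 0 \pmod{(f_1,\ldots,f_c)}$ for every $q$. By quasi-smoothness, the Jacobian has rank $c$ at every point of $C'_X$, so the loci $U_J \subset C'_X$ where $M_J\ne 0$, as $J$ varies over $(c+1)$-subsets of $\{0,\ldots,n\}$, form an open cover of $C'_X$. On each $U_J$, Cramer's rule writes $\partial|_{U_J}$ as an explicit $R$-linear combination of the $v_I$'s, one for each $I$ obtained by removing an element of $J$ from its complement. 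A global patching argument, followed by extension across the vertex via reflexivity (valid since the vertex has codimension $\ge 2$), produces a preimage in the weight-$I_X$ component of $\wedge^{n-c}(\bigoplus_i R(a_i))$, establishing surjectivity.
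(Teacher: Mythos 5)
Part (i) of your argument is essentially the paper's: the authors also use the relative tangent sequence of $\pi$ on $C'_X$, take the $\bC^*$-invariant part of the pushforward, and conclude from the vanishing of an $H^1$ of a twist of $\cO_X$. (They identify the vertical tangent sheaf with $(\pi^*\cO_X(1))^{\vee\vee}$ and invoke $H^1(X,\cO_X(1))=0$, whereas your Euler-field trivialization leads to $H^1(X,\cO_X)=0$; both vanishings hold for quasi-smooth WCIs of dimension $\ge 2$ by the standard cohomology computation in Fletcher/Dolgachev.) This part is fine, up to the usual caveat that the quotient term of the sequence should be taken reflexively over the orbifold locus, which does not affect the $H^0$ computation.

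In part (ii) your map $e_{i_1}\wedge\cdots\wedge e_{i_{n-c}}\mapsto v_I$ is exactly the paper's map made explicit: it is the contraction of $dz_{i_1}\wedge\cdots\wedge dz_{i_{n-c}}$ against the generator of $\omega_{C_X}^{-1}$ under $\cT_{C_X}\simeq\Omega^{n-c}_{C_X}\otimes\omega_{C_X}^{-1}$, and your weight count is correct. The gap is in the surjectivity. Cramer's rule on the cover of $C'_X$ by the loci where some $c\times c$ minor $M_{J'}$ is nonzero (note these are indexed by $c$-element subsets $J'$, not $(c+1)$-element ones as you wrote) only shows that the cokernel of the sheaf map $\Phi\colon\wedge^{n-c}\bigl(\bigoplus_i\cO_{C_X}(a_i)\bigr)\to\cT_{C_X}$ is supported at the vertex. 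Since $C'_X$ is not affine, patching the local Cramer expressions into a global preimage is obstructed by $H^1(C'_X,\ker\Phi)\cong H^2_{\{0\}}(C_X,\ker\Phi)$, and reflexivity of $\cT_{C_X}$ only yields depth $\ge 2$ at the vertex, which does not kill this group; equivalently, working on $C_X$ itself one must still rule out that a global invariant vector field has nonzero image in the skyscraper $\cT_{C_X}/\operatorname{im}\Phi$. The paper sidesteps this entirely: it uses the isomorphism $\cT_{C_X}\simeq\Omega^{n-c}_{C_X}\otimes\omega_{C_X}^{-1}$ together with the canonical surjection of coherent sheaves $\Omega^{n-c}_{\bA^{n+1}}|_{C_X}\twoheadrightarrow\Omega^{n-c}_{C_X}$ coming from right-exactness of K\"ahler differentials, which is surjective at the vertex as well, and then takes $H^0$ on the affine scheme $C_X$, where the global sections functor is exact. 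You should either adopt that route or supply the missing depth estimate at the vertex.
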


\begin{proof}
(i) Note that $C'_X \simeq \Spec_X \bigoplus_{i \in \bZ} \cO_X(i)$ 
and we have an exact sequence 
\[
0 \rightarrow (\cT_{C'_X/ X})^{\vee \vee} \rightarrow \cT_{C'_X} \rightarrow (\pi^*\cT_{X})^{\vee \vee} \rightarrow 0. 
\]
Note also that $(\cT_{C'_X/ X})^{\vee \vee} \simeq (\pi^* \cO_X(1))^{\vee \vee}$ by the above description of $C'_X$. 
By taking $\pi_*$ and its $\bC^*$-invariant part, we obtain an exact sequence 
\[
H^0(C'_X, \cT_{C'_X})^{\bC^*} \rightarrow H^0(X, \cT_X) \rightarrow H^1(X, \cO_X(1)). 
\] 
Since $\cT_{C_X}$ is reflexive and $H^0( C_X, \cT_{C_X}) \simeq H^0(C'_X, \cT_{C'_X})$, the required surjectivity follows from $H^1(X, \cO_X(1)) =0$ by $\dim X = n-1 \ge 2$ (cf. \cite{Dolgachev},  \cite[Lemma 7.1]{Fletcher00}).

\vspace{2mm}

\noindent(ii) Note that $\dim C_X = n-c+1$ and we have $\cT_{C_X} \simeq \cT_{C_X} \otimes \omega_{C_X} \otimes \omega_{C_X}^{-1} \simeq \Omega^{n-c}_{C_X} \otimes \omega_{C_X}^{-1}$. Since the generator $s \in H^0(\omega_{C_X}^{-1})$ satisfies that $\lambda \cdot s = \lambda^{-I_X} s$, we see that 
\[
H^0(C_X, \cT_{C_X})^{\bC^*} \simeq H^0(C_X, \Omega^{n-c}_{C_X} \otimes \omega_{C_X}^{-1})^{\bC^*} \simeq H^0(C_X, \Omega^{n-c}_{C_X})_{I_X}.  
\]
The surjection $\Omega^{n-c}_{\bA^{n+1}}|_{C_X} \twoheadrightarrow \Omega^{n-c}_{C_X}$ induces a surjection 
\[
H^0(C_X, \Omega^{n-c}_{\bA^{n+1}}|_{C_X})_{I_X} \rightarrow H^0(C_X, \Omega_{C_X}^{n-c})_{I_X}.
\] 
Since we have $H^0(C_X, \Omega^{n-c}_{\bA^{n+1}}|_{C_X})\simeq 
\wedge^{n-c}(\bigoplus_{i=0}^n R(a_i))$, we obtain the required surjection. 
\end{proof}

We give a sufficient condition for the finiteness of the automorphism groups of quasi-smooth weighted complete intersections as follows. 
This is a generalization of \cite[Theorem 1.3]{MR4070067} which is based on the calculations in \cite{MR636200}. 

\begin{theorem}\label{thm:autfinite}
Let $X= X_{d_1, \ldots, d_c} \subset \bP(a_0, \ldots, a_n)$ be a quasi-smooth Fano weighted complete intersection which is not a linear cone with $a_0 \le \cdots \le a_n$. Then we have the following. 
\begin{enumerate}
\item[(i)] The automorphism group $\Aut(X)$ is finite if 
\begin{equation*}\label{eq:finitecond}
\sum_{j=1}^c d_j > a_{n-c} + \cdots + a_n. 
\end{equation*} 
\item[(ii)] In particular, $\Aut (X)$ is finite if $I_X < \dim X$. 
\end{enumerate}
\end{theorem}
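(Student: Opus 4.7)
The plan is to reduce finiteness of $\Aut(X)$ to the vanishing of $H^0(X, \cT_X)$ and then to establish this vanishing via the surjections of Proposition \ref{prop:surjections}. Since $X$ is a projective Fano variety, $\Aut(X)$ is an algebraic group of finite type whose Lie algebra is $H^0(X, \cT_X)$, so the vanishing of this cohomology group forces $\Aut(X)$ to be finite.

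By Proposition \ref{prop:surjections}(i)--(ii) it suffices to show that the weight-$I_X$ part of $\wedge^{n-c}\bigl(\bigoplus_{i=0}^n R(a_i)\bigr)$ is zero. As an $R$-module this exterior power is free of rank $\binom{n+1}{n-c}$, with basis $e_{i_1}\wedge\cdots\wedge e_{i_{n-c}}$ indexed by strictly increasing multi-indices $\underline{i}=(i_1<\cdots<i_{n-c})$, and the basis element $e_{i_1}\wedge\cdots\wedge e_{i_{n-c}}$ carries $\bC^*$-weight $a_{i_1}+\cdots+a_{i_{n-c}}$. Its weight-$I_X$ part therefore decomposes as $\bigoplus_{\underline{i}} R_{\,I_X-(a_{i_1}+\cdots+a_{i_{n-c}})}$, and because $R$ is supported in non-negative degrees the vanishing reduces to the inequality $I_X - (a_{i_1}+\cdots+a_{i_{n-c}}) < 0$ for every multi-index $\underline{i}$.

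Writing $J:=\{0,1,\ldots,n\}\setminus\{i_1,\ldots,i_{n-c}\}$, a subset of size $c+1$, and using $I_X = \sum_i a_i - \sum_j d_j$, the desired inequality rearranges to $\sum_{i\in J}a_i<\sum_{j=1}^c d_j$. Since $a_0\le\cdots\le a_n$, the maximum of $\sum_{i\in J}a_i$ over all $(c+1)$-subsets is achieved by $J=\{n-c,n-c+1,\ldots,n\}$ and equals $a_{n-c}+\cdots+a_n$; hence the required bound is exactly the hypothesis in (i), proving part (i).

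For (ii), the inequality $I_X<\dim X=n-c$ rearranges to $\sum_{j=1}^c d_j>\sum_{i=0}^n a_i-(n-c)$, and because each $a_i\ge 1$ the lowest $n-c$ weights satisfy $a_0+\cdots+a_{n-c-1}\ge n-c$, which yields $\sum_i a_i-(n-c)\ge a_{n-c}+\cdots+a_n$. This reduces (ii) to (i). I expect the only non-formal point to be the initial step: arguing that the finiteness of $\Aut(X)$ (rather than merely of its identity component) follows from $H^0(X,\cT_X)=0$. This relies on the fact that $X$ is a polarized Fano variety, so that $\Aut(X)$ is of finite type, which is standard but worth an explicit reference in the final write-up.
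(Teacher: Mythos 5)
Your proposal is correct and follows essentially the same route as the paper: reduce finiteness of $\Aut(X)$ to $H^0(X,\cT_X)=0$ (using that $\Aut(X)$ is a finite-type algebraic group for a polarized Fano), then apply Proposition \ref{prop:surjections} and observe that the lowest $\bC^*$-weight occurring in $\wedge^{n-c}\bigl(\bigoplus_i R(a_i)\bigr)$ is $a_0+\cdots+a_{n-c-1}$, which exceeds $I_X$ exactly under the stated hypothesis. Your explicit basis decomposition and the rearrangement via the complementary index set $J$ is just a spelled-out version of the paper's one-line weight computation, and the reduction of (ii) to (i) via $a_0+\cdots+a_{n-c-1}\ge n-c$ is identical.
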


\begin{proof}
(i) To prove the first statement, it is enough to show $H^0(X, \cT_X) =0$ when  $a_0+ \cdots + a_{n-c-1} > I_X$. 
Let $L:=  \wedge^{n-c} (\bigoplus_{i=0}^n R(a_i))$ and $L= \bigoplus_{i \in \bZ} L_i$ be the eigen-decomposition with respect to the $\bC^*$-action on $L$. We check that $L_i =0$ if $i < a_0+ \cdots a_{n-c-1}$ by the construction of $L$. In particular, we obtain $L_{I_X} =0$, thus the first statement follows by Proposition \ref{prop:surjections}. 

\vspace{2mm}

\noindent(ii) This follows from $a_0 + \cdots a_{n-c-1} \ge n-c = \dim X$.

\end{proof}

\begin{proof}[Proof of Corollary \ref{cor:generalKstable}]
If $X_d$ is a Fermat hypersurface, then $X_d$ is K-stable since $X_d$ is K-polystable by  Theorem \ref{thm:Fermat} and its automorphism group is finite by Theorem \ref{thm:autfinite}. This implies that general hypersurfaces are K-stable by the openness of uniform K-stability (\cite[Theorem A]{BL18}, \cite{BLX19}) and the equivalence of uniform K-stability and K-stability \cite[Theorem 1.5]{Liu:2021vj}. 
\end{proof}

\section*{Acknowledgement}
We thank Giulio Codogni, Kento Fujita, Yuchen Liu, Yuji Odaka, Takuzo Okada and Andrea Petracci for valuable comments. 
We are grateful to the referee for helpful comments. 
Part of this project has been realized when the first author visited Universit\`a degli Studi di Milano. He would like to thank the university for the support. 
The first author was partially supported by JSPS KAKENHI Grant Numbers JP17H06127, JP19K14509. 
The second author is member of the GNSAGA group of INdAM.

\bibliographystyle{amsalpha}
\bibliography{Library}

\providecommand{\bysame}{\leavevmode\hbox to3em{\hrulefill}\thinspace}
\providecommand{\MR}{\relax\ifhmode\unskip\space\fi MR }
% \MRhref is called by the amsart/book/proc definition of \MR.
\providecommand{\MRhref}[2]{%
  \href{http://www.ams.org/mathscinet-getitem?mr=#1}{#2}
}
\providecommand{\href}[2]{#2}
\begin{thebibliography}{GMSY07}

\bibitem[Ara02]{MR1926877}
Carolina Araujo, \emph{K\"{a}hler-{E}instein metrics for some quasi-smooth log
  del {P}ezzo surfaces}, Trans. Amer. Math. Soc. \textbf{354} (2002), no.~11,
  4303--4312. \MR{1926877}

\bibitem[AZ20]{Ahmadinezhad:2020wz}
Hamid Ahmadinezhad and Ziquan Zhuang, \emph{K-stability of {F}ano varieties via
  admissible flags}, https://arxiv.org/pdf/2003.13788.pdf (2020).

\bibitem[AZ21]{Ahmadinezhad:2021tq}
\bysame, \emph{Seshadri constants and {K}-stability of {F}ano manifolds},
  https://arxiv.org/pdf/2101.09246.pdf (2021).

\bibitem[BG06]{MR2237105}
Charles~P. Boyer and Krzysztof Galicki, \emph{Sasakian geometry and {E}instein
  metrics on spheres}, Perspectives in {R}iemannian geometry, CRM Proc. Lecture
  Notes, vol.~40, Amer. Math. Soc., Providence, RI, 2006, pp.~47--61.
  \MR{2237105}

\bibitem[BGK05]{BGK05}
Charles~P. Boyer, Krzysztof Galicki, and J\'{a}nos Koll\'{a}r, \emph{Einstein
  metrics on spheres}, Ann. of Math. (2) \textbf{162} (2005), no.~1, 557--580.
  \MR{2178969}

\bibitem[BGN02]{MR1897386}
Charles~P. Boyer, Krzysztof Galicki, and Michael Nakamaye,
  \emph{Sasakian-{E}instein structures on {$9\#(S^2\times S^3)$}}, Trans. Amer.
  Math. Soc. \textbf{354} (2002), no.~8, 2983--2996. \MR{1897386}

\bibitem[BK02]{BK02}
Gavin Brown and Alexander Kasprzyk, \emph{"the graded ring database."},
  Available at http://www.grdb.co.uk (2002).

\bibitem[BK16]{BK16}
\bysame, \emph{Four-dimensional projective orbifold hypersurfaces}, Exp. Math.
  \textbf{25} (2016), no.~2, 176--193. \MR{3463567}

\bibitem[BL18]{BL18}
Harold Blum and Yuchen Liu, \emph{{Openness of uniform K-stability in families
  of $\mathbb{Q}$-Fano varieties}}, https://arxiv.org/pdf/1808.09070.pdf
  (2018).

\bibitem[BLX19]{BLX19}
Harold {Blum}, Yuchen {Liu}, and Chenyang {Xu}, \emph{{Openness of
  K-semistability for Fano varieties}}, arXiv e-prints (2019),
  arXiv:1907.02408.

\bibitem[Che01]{Cheltsov01}
I.~Cheltsov, \emph{Log canonical thresholds on hypersurfaces}, Mat. Sb.
  \textbf{192} (2001), no.~8, 155--172.

\bibitem[Che08]{Cheltsov08}
Ivan Cheltsov, \emph{Fano varieties with many selfmaps}, Adv. Math.
  \textbf{217} (2008), no.~1, 97--124. \MR{2357324}

\bibitem[Che09]{MR2499678}
I.~A. Cheltsov, \emph{Extremal metrics on two {F}ano manifolds}, Mat. Sb.
  \textbf{200} (2009), no.~1, 97--136. \MR{2499678}

\bibitem[CP02]{CP02}
I.~Cheltsov and J.~Park, \emph{Global log-canonical thresholds and generalized
  {E}ckardt points}, Mat. Sb. \textbf{193} (2002), no.~5, 149--160.

\bibitem[CPS10]{CJS10}
Ivan Cheltsov, Jihun Park, and Constantin Shramov, \emph{Exceptional del
  {P}ezzo hypersurfaces}, J. Geom. Anal. \textbf{20} (2010), no.~4, 787--816.
  \MR{2683768}

\bibitem[CS08]{MR2484031}
I.~A. Cheltsov and K.~A. Shramov, \emph{Log-canonical thresholds for
  nonsingular {F}ano threefolds}, Uspekhi Mat. Nauk \textbf{63} (2008),
  no.~5(383), 73--180. \MR{2484031}

\bibitem[CS11]{MR2860982}
Ivan Cheltsov and Constantin Shramov, \emph{On exceptional quotient
  singularities}, Geom. Topol. \textbf{15} (2011), no.~4, 1843--1882.
  \MR{2860982}

\bibitem[CS19]{MR3956894}
Tristan~C. Collins and G\'{a}bor Sz\'{e}kelyhidi, \emph{Sasaki-{E}instein
  metrics and {K}-stability}, Geom. Topol. \textbf{23} (2019), no.~3,
  1339--1413. \MR{3956894}

\bibitem[dF13]{deF13}
Tommaso de~Fernex, \emph{Birationally rigid hypersurfaces}, Invent. Math.
  \textbf{192} (2013), no.~3, 533--566. \MR{3049929}

\bibitem[dF16]{deF16}
\bysame, \emph{Erratum to: {B}irationally rigid hypersurfaces [ {MR}3049929]},
  Invent. Math. \textbf{203} (2016), no.~2, 675--680. \MR{3455160}

\bibitem[DK01]{DK01}
J-P. Demailly and J.~Koll{\'a}r, \emph{Semi-continuity of complex singularity
  exponents and {K}{\"a}hler-{E}instein metrics on {F}ano orbifolds}, Ann. Sci.
  {\'E}cole Norm. Sup. (4) \textbf{34} (2001), no.~4, 525--556.

\bibitem[Dol82]{Dolgachev}
I.~Dolgachev, \emph{Weighted projective varieties}, Group actions and vector
  fields ({V}ancouver, {B}.{C}., 1981), Lecture Notes in Math., vol. 956,
  Springer, Berlin, 1982, pp.~34--71. \MR{704986 (85g:14060)}

\bibitem[Fle81]{MR636200}
Hubert Flenner, \emph{Divisorenklassengruppen quasihomogener
  {S}ingularit\"{a}ten}, J. Reine Angew. Math. \textbf{328} (1981), 128--160.
  \MR{636200}

\bibitem[Fuj17]{Fujita:2017wm}
Kento Fujita, \emph{{K-stability of log Fano hyperplane arrangements}},
  https://arxiv.org/pdf/1709.08213.pdf, to appear in Journal of Algebraic
  Geometry (2017).

\bibitem[Fuj19]{MR3936640}
Kento Fujita, \emph{K-stability of {F}ano manifolds with not small alpha
  invariants}, J. Inst. Math. Jussieu \textbf{18} (2019), no.~3, 519--530.
  \MR{3936640}

\bibitem[GMSY07]{MR2318866}
Jerome~P. Gauntlett, Dario Martelli, James Sparks, and Shing-Tung Yau,
  \emph{Obstructions to the existence of {S}asaki-{E}instein metrics}, Comm.
  Math. Phys. \textbf{273} (2007), no.~3, 803--827. \MR{2318866}

\bibitem[IF00]{Fletcher00}
A.~R. Iano-Fletcher, \emph{Working with weighted complete intersections},
  Explicit birational geometry of 3-folds, London Math. Soc. Lecture Note Ser.,
  vol. 281, Cambridge Univ. Press, Cambridge, 2000, pp.~101--173.

\bibitem[JK01a]{Johnson-Kollar}
J.~M. Johnson and J.~Koll\'ar, \emph{Fano hypersurfaces in weighted projective
  4-spaces}, Experiment. Math. \textbf{10} (2001), no.~1, 151--158.
  \MR{1822861}

\bibitem[JK01b]{Johnson-Kollar2}
J.~M. Johnson and J.~Koll\'{a}r, \emph{K\"{a}hler-{E}instein metrics on log del
  {P}ezzo surfaces in weighted projective 3-spaces}, Ann. Inst. Fourier
  (Grenoble) \textbf{51} (2001), no.~1, 69--79. \MR{1821068}

\bibitem[KM98]{KM98}
J.~Koll{\'a}r and S.~Mori, \emph{Birational {G}eometry of {A}lgebraic
  {V}arieties}, Cambridge {T}racts in {M}athematics, vol. 134, Cambridge
  University Press, 1998.

\bibitem[KMM87]{KMM87}
Y.~Kawamata, K.~Matsuda, and K.~Matsuki, \emph{Introduction to the minimal
  model program}, Algebraic Geometry, Sendai (T.~Oda, ed.),
  Kinokuniya-North-Holland, 1987, Adv. Stud. Pure Math, vol 10, pp.~283--360.

\bibitem[KOW18]{KOW18}
In-Kyun Kim, Takuzo Okada, and Joonyeong Won, \emph{Alpha invariants of
  birationally rigid {F}ano three-folds}, Int. Math. Res. Not. IMRN (2018),
  no.~9, 2745--2800. \MR{3801495}

\bibitem[KOW20]{Kim:2020wz}
In-Kyun Kim, Takuzo Okada, and Joonyeong Won, \emph{{K-stability of
  birationally superrigid Fano 3-fold weighted hypersurfaces}},
  https://arxiv.org/pdf/2011.07512.pdf (2020).

\bibitem[Laz04]{Lazarsfeld04b}
R.~Lazarsfeld, \emph{Positivity in {A}lgebraic {G}eometry. {II}}, Ergebnisse
  der Mathematik und ihrer Grenzgebiete, vol.~49, Springer-Verlag, Berlin,
  2004.

\bibitem[Li19]{Li:2019uh}
Chi Li, \emph{{G-uniform stability and K{\"a}hler-Einstein metrics on Fano
  varieties}}, https://arxiv.org/pdf/1907.09399.pdf (2019).

\bibitem[LP20]{Liu:2020tk}
Yuchen Liu and Andrea Petracci, \emph{{On K-stability of some del Pezzo
  surfaces of Fano index 2}}.

\bibitem[LST24]{LST}
Y.~Liu, T.~Sano, and L.~Tasin, \emph{{Infinitely many families of
  Sasaki-Einstein metrics on spheres}}, To appear in Journal of Differential
  Geometry (2024).

\bibitem[LTW19]{Li:2019uj}
Chi Li, Gang Tian, and Feng Wang, \emph{{The uniform version of
  Yau-Tian-Donaldson conjecture for singular Fano varieties}},
  https://arxiv.org/pdf/1903.01215.pdf (2019).

\bibitem[LXZ21]{Liu:2021vj}
Yuchen Liu, Chenyang Xu, and Ziquan Zhuang, \emph{{Finite generation for
  valuations computing stability thresholds and applications to K-stability}},
  https://arxiv.org/pdf/2102.09405.pdf (2021).

\bibitem[LZ19]{Liu:2019tf}
Yuchen Liu and Ziquan Zhuang, \emph{{On the sharpness of Tian's criterion for
  K-stability}}, https://arxiv.org/pdf/1903.04719.pdf (2019).

\bibitem[Nak04]{Nakayama04}
N.~Nakayama, \emph{Zariski-decomposition and abundance}, MSJ Memoirs, vol.~14,
  Mathematical Society of Japan, Tokyo, 2004.

\bibitem[OS12]{MR2889147}
Yuji Odaka and Yuji Sano, \emph{Alpha invariant and {K}-stability of {$\Bbb
  Q$}-{F}ano varieties}, Adv. Math. \textbf{229} (2012), no.~5, 2818--2834.
  \MR{2889147}

\bibitem[PS19]{MR4070067}
V.~Przyjalkowski and A.~Shramov, \emph{Automorphisms of {W}eighted {C}omplete
  {I}ntersections}, Tr. Mat. Inst. Steklova \textbf{307} (2019), no.~Algebra,
  Teoriya Chisel i Algebraicheskaya Geometriya, 217--229. \MR{4070067}

\bibitem[PS20]{MR4124110}
Victor Przyjalkowski and Constantin Shramov, \emph{Bounds for smooth {F}ano
  weighted complete intersections}, Commun. Number Theory Phys. \textbf{14}
  (2020), no.~3, 511--553. \MR{4124110}

\bibitem[PS21]{MR4223973}
V.~V. Przyjalkowski and A.~Shramov, \emph{On automorphisms of quasi-smooth
  weighted complete intersections}, Mat. Sb. \textbf{212} (2021), no.~3,
  112--127. \MR{4223973}

\bibitem[PST17]{PST17}
Marco Pizzato, Taro Sano, and Luca Tasin, \emph{Effective nonvanishing for
  {F}ano weighted complete intersections}, Algebra Number Theory \textbf{11}
  (2017), no.~10, 2369--2395. \MR{3744360}

\bibitem[Puk98]{Pukhlikov98}
Aleksandr~V. Pukhlikov, \emph{Birational automorphisms of {F}ano
  hypersurfaces}, Invent. Math. \textbf{134} (1998), no.~2, 401--426.

\bibitem[SZ19]{SZ19}
Charlie Stibitz and Ziquan Zhuang, \emph{K-stability of birationally superrigid
  {F}ano varieties}, Compos. Math. \textbf{155} (2019), no.~9, 1845--1852.
  \MR{3993805}

\bibitem[Tia87]{MR894378}
Gang Tian, \emph{On {K}\"{a}hler-{E}instein metrics on certain {K}\"{a}hler
  manifolds with {$C_1(M)>0$}}, Invent. Math. \textbf{89} (1987), no.~2,
  225--246. \MR{894378}

\bibitem[Zhu20a]{MR4139041}
Ziquan Zhuang, \emph{Birational superrigidity and {$K$}-stability of {F}ano
  complete intersections of index 1}, Duke Math. J. \textbf{169} (2020),
  no.~12, 2205--2229, With an appendix by Zhuang and Charlie Stibitz.
  \MR{4139041}

\bibitem[Zhu20b]{MR4056840}
\bysame, \emph{Birational superrigidity is not a locally closed property},
  Selecta Math. (N.S.) \textbf{26} (2020), no.~1, Paper No. 11, 20.
  \MR{4056840}

\bibitem[Zhu21]{MR4309493}
\bysame, \emph{Optimal destabilizing centers and equivariant {K}-stability},
  Invent. Math. \textbf{226} (2021), no.~1, 195--223. \MR{4309493}

\end{thebibliography}

\end{document}